\title{Accelerated Almost-Sure Convergence Rates for Nonconvex Stochastic Gradient Descent using Stochastic Learning Rates}
\author{Theodoros~Mamalis \\
	Department of Electrical and Computer Engineering\\
	University of Illinois at Urbana-Champaign\\
	306 N Wright St, Urbana, IL 61801, USA \\
	\texttt{mamalis2@illinois.edu} \\
	\And
	Du{\v s}an Stipanovi{\' c} \\
	Coordinated Science Laboratory\\
	University of Illinois at Urbana-Champaign\\
	1308 W Main St, Urbana, IL 61801, USA \\
	\texttt{dusan@illinois.edu} \\
	\AND
	Petros Voulgaris \\
	Department of Mechanical Engineering\\
	University of Nevada\\
	Reno, NV 89557, USA \\
	\texttt{pvoulgaris@unr.edu} \\
}
\newtheorem{theorem}{Theorem}
\newtheorem{lemma}{Lemma}
\newtheorem{assumption}{Assumption}
\newtheorem{proposition}{Proposition}
\def\blfootnote{\gdef\@thefnmark{}\@footnotetext}
\begin{document}

\maketitle
\thispagestyle{empty}
\pagestyle{empty}


\blfootnote{This work is supported by the grant from the National Robotics Initiative grant titled “NRI: FND:COLLAB: Multi-Vehicle Systems for Collecting Shadow-Free Imagery in Precision Agriculture” (grant no.2019-04791/project accession no. 1020285) from the USDA National Institute of Food and Agriculture.
}

\begin{abstract}
Large-scale optimization problems require algorithms both effective and efficient. One such popular and proven algorithm is Stochastic Gradient Descent which uses first-order gradient information to solve these problems. This paper studies almost-sure convergence rates of the Stochastic Gradient Descent method when instead of deterministic, its learning rate becomes stochastic. In particular, its learning rate is equipped with a multiplicative stochasticity, producing a stochastic learning rate scheme. Theoretical results show accelerated almost-sure convergence rates of Stochastic Gradient Descent in a nonconvex setting when using an appropriate stochastic learning rate, compared to a deterministic-learning-rate scheme. The theoretical results are verified empirically.
\end{abstract}

\section{Introduction}

Solving large-scale optimization problems usually requires the optimization algorithm to process large amounts of data which can lead to long computational times as well as large memory requirements. For this reason, simple and cost-effective algorithms need to be used. The most prominent examples are the so-called gradient algorithms which utilize gradient information to iteratively estimate a minimizer for the optimization problem, with perhaps the most prominent algorithm being Stochastic Gradient Descent (SGD), initially introduced in \cite{SGD1951}. 

Even though in most optimization algorithms (including SGD) commonly the learning rate $ \eta_k $ (where $ k $ is the iteration count) also referred to as step size, is taken to be deterministic, the novelty in this work is that it introduces the problem where the learning rate becomes stochastic by equipping it with multiplicative stochasticity. In general, it has been observed that minimization performance can be noticeably improved under appropriate learning schemes  e.g., under deterministic adaptive learning rate schemes. Examples include ADAM \cite{ADAM15} and some variants (e.g., AMSGrad \cite{ADAMGRAD18} or ADAMW \cite{ADAMW2019}) or precursors  (e.g., \cite{Streeter10}, \cite{Duchi11}). This work demonstrates that performance can be significantly improved if instead of just adaptive, the learning rate becomes both adaptive and stochastic.

In specific, the learning rate instead of $ \eta_k $ now becomes $ \eta_k u_k $  (where e.g., $\eta_k = \eta/\sqrt{k}$ or $\eta_k=\eta $, with $\eta$ a positive constant; with some abuse of terminology, $ \eta_k $ will be referred to as step size) and where $ u_k $ is a random variable, referred to as stochasticity factor (SF) in the rest of the paper. Because of the multiplicative nature of the SF 
the stochastic learning rate examined in this work will be referred to as Multiplicative-Stochastic-Learning-Rate (MSLR).
A stochastic learning rate whose SF is a uniformly distributed random variable at each iteration will be referred to as Uniform-Multiplicative-Stochastic-Learning-Rate (UMSLR). This work uses MSLR on SGD to prove accelerated almost sure convergence rates in a nonconvex and smooth setting compared to a deterministic learning rate.

The SGD algorithm has been a traditional topic of research investigated in numerous works, e.g., \cite{bertsekastsitsiklis2000,mertikopoulos2017}, \cite{loizou2018}. In \cite{mertikopoulos2020} it is shown that the gradient norm of SGD converges almost surely (a.s.). \cite{loizou2020} show convergence without knowledge of the smoothness constant using Line-Search \cite{NocedalWright2006} and Polyak stepsizes \cite{Polyak1987}. Almost sure convergence rates have been provided in works such as \cite{bottou2003, nguyen2018}. The authors in \cite{SHB_AS2021} derive almost sure convergence rates for the minimum squared gradient norm of SGD in a noncovex and smooth setting by utilizing a convergence result from \cite{ROBBINS1971}. In-expectation convergence analyses have been made in works such as \cite{nemirovski2009, bachmoulines2011,Bottou2018} and references therein. In-expectation convergence analysis of SGD for stochastic-learning-rates was made in \cite{mamalis2020exp_and_online_conv}.

In summary, this work provides accelerated a.s. convergence rates for SGD in the nonconvex and smooth setting using MSLR, compared to the deterministic learning rate case. Experiments
demonstrate the improved a.s. convergence rates empirically. In more detail, the main contributions of this paper are:
\begin{itemize}
	\item The introduction of the notion of stochastic learning rate schemes. Note that in this work a stochastic learning rate scheme should not depend on the stochasticity induced by the samples and how they are processed 
	(e.g., as is usually the case with the computation of the gradients in most of the stochastic gradient methods). 
	The stochasticity in the learning rates should be able to be directly controlled to be of any distribution, a requirement whose importance is discussed in the next point. It should be noted that this is in contrast to the stochasticity induced by randomly sampling the dataset which can be unable or more difficult to possess a (discrete) distribution with any prespecified property, regardless of the sampling patterns.
	\item Accelerated a.s. convergence rates for the SGD algorithm.  The introduction of stochastic learning rate schemes can be rigorously shown to provide better convergence rates than the deterministic learning rate versions of the SGD algorithm in the nonconvex and smooth settings for the latter. In specific, moments of the distribution of the SF enter the a.s. convergence rate of SGD, directly affecting its rates. This means that by choosing the distribution of the learning rate stochasticity, i.e., of the SF, to possess appropriate prespecified properties, the a.s. convergence rates of the resulting stochastic learning rate algorithm can be improved compared to its deterministic counterpart. 
	In absence of an SF the a.s. convergence rates reduce to those of the deterministic learning rate algorithms as expected. 
	\item Empirically demonstrating that MSLR schemes, and in specific UMSLR, exhibit significantly improved optimization performance for SGD in accordance to the theoretical results, for some popular datasets.
\end{itemize}

The remainder of the paper is organized as follows. Section \ref{sec:Prelim_and_Assumptions} states the context of this work in mathematical terms, along with a lemma and necessary assumption. The mathematical derivation of the almost sure convergence rates of the SGD algorithm with MSLR is given in Section \ref{sec:sgd_a_s_convergence_rates}. Section \ref{sec:Discussion on the Stochasticity Factor} constructs an appropriate SF which provides accelerated a.s. convergence rates for SGD, and provides a discussion on selecting hyperparameters that complement the SF distribution in accelerating performance. In Section \ref{sec:results}, the experimental results for SGD using a UMSLR scheme are presented and compared to the deterministic-learning rate case. Section \ref{sec:conclusion} concludes the paper with avenues of future work.

\section{Problem Formulation and Assumptions}
\label{sec:Prelim_and_Assumptions}
Let $ v $ be a random variable with distribution $ D $, $ X $ a set in $ \mathbb{R}^d $, and $ f_v(x) $ a function that depends on $ v $ and $ x \in X $. Then the optimization problem is:
\begin{equation}\label{eq:minimization}
\mathop { \min }\limits_{x  \in {\mathbb{R}^d}} \mathbb{E}_{v\sim D} [f_{v}(x)],
\end{equation}
Define $ f(x) := \mathbb{E}_{v\sim D} [f_{v}(x)] $. In the well-known case of Empirical Risk Minimization (ERM) $ \mathbb{E}_{v \sim D}[f_v(x)]=  \frac{1}{n}\sum\nolimits_{i = 1}^n {f_{v_i}({x})} $,  where $ v$ represents a random sample from the available training data and $ x $ the parameters to be learned. Assume $ f $ is smooth with $ L > 0 $ being its smoothness constant. It is assumed that at least one minimizer $ x_* $ to (\ref{eq:minimization}) exists yielding $ f_* $. 
Then, the SGD algorithm is given by:
\begin{equation}
\label{eq:SGD}
{x_{k + 1}} = {x_k} - {\eta_k}u_k\nabla {f_{{v_k}}}({x_k}).
\end{equation}	
 The stochastic learning rate is $  \eta_k u_k $ where $ u_k $ denotes the SF,
and $ \eta_k$ is deterministic and will be referred to as stepsize. This learning rate scheme will be referred to as MSLR.
Typical assumptions made in the context of stochastic approximations include either bounded gradients or bounded variance of the gradients. Below, a more general assumption is given:
\begin{assumption} [Expected Smoothness] \label{as:exp_smooth}
	There exist constants A,B,C s.t. for all $ x \in \mathbb{R}^d $:
	\begin{equation}
	\mathbb{E}_v[\left\| \nabla f_v(x) \right\|^2] \le A(f(x)-f_*) + B\left\| \nabla f(x) \right\|^2 + C.
	\end{equation}
\end{assumption}
This assumption is introduced in \cite{khaled2020} where is called Expected Smoothness and wherein the properties and significance of this assumption, especially for settings such as ERM, are discussed. Next, a lemma is presented:
\begin{lemma} \label{lemma:2.1}
	Consider a filtration $ (F_k)_k $, the nonnegative sequences of $ (F_k)_k $-adapted processes $ (V_k)_k $, $ (U_k)_k $ and $ (Z_k)_k $, and a sequence of positive numbers $(\gamma_k)_k$ such that  $ \sum\nolimits_k^{} Z_k < \infty$ almost surely and $ \prod\nolimits_k^{} (1 + \gamma_k) < \infty $ , and:
	\begin{equation}
	\forall k \in \mathbb{N},\,\,\mathbb{E}[{V_{k + 1}}|{F_k}] + {U_{k + 1}} \le (1 + {\gamma _k}){V_k} + {Z_k}.
	\end{equation}
	Then $ ({V_k})_k $ converges and $ \sum\nolimits_k^{} U_k < \infty $ almost surely.
\end{lemma}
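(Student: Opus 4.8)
The plan is to recognize Lemma~\ref{lemma:2.1} as a form of the Robbins--Siegmund almost-supermartingale convergence theorem \cite{ROBBINS1971} and to prove it by turning the given ``almost-supermartingale'' inequality into a genuine \emph{nonnegative} supermartingale, for which the martingale convergence theorem applies. Concretely, first I would rescale to remove the multiplicative factor $1+\gamma_k$. Since $\gamma_k\ge 0$ and $\prod_k(1+\gamma_k)<\infty$, the partial products $\Gamma_k:=\prod_{j=0}^{k-1}(1+\gamma_j)$ (with $\Gamma_0:=1$) increase to a finite limit $\Gamma_\infty\in[1,\infty)$, so $1\le\Gamma_k\le\Gamma_\infty$ for every $k$. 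Set $\widetilde V_k:=V_k/\Gamma_k$, $\widetilde U_k:=U_k/\Gamma_k$ and $\widetilde Z_k:=Z_k/\Gamma_{k+1}$; these remain nonnegative and $(F_k)$-adapted, and $\sum_k\widetilde Z_k\le\sum_k Z_k<\infty$ a.s. Dividing the hypothesis by $\Gamma_{k+1}=(1+\gamma_k)\Gamma_k$ yields, for every $k$,
\begin{equation}
\mathbb{E}[\widetilde V_{k+1}\mid F_k]+\widetilde U_{k+1}\le\widetilde V_k+\widetilde Z_k .
\end{equation}

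Next I would build a supermartingale by compensating the $Z$- and $U$-terms. Define $Y_k:=\widetilde V_k+\sum_{j=1}^{k}\widetilde U_j-\sum_{j=0}^{k-1}\widetilde Z_j$, with empty sums equal to $0$; each $Y_k$ is $F_k$-measurable. Taking $\mathbb{E}[\cdot\mid F_k]$ of the displayed inequality — so that the $\widetilde U_{k+1}$ appearing in $Y_{k+1}$ contributes $\mathbb{E}[\widetilde U_{k+1}\mid F_k]$, which together with $\mathbb{E}[\widetilde V_{k+1}\mid F_k]$ is dominated by $\widetilde V_k+\widetilde Z_k$ — and cancelling the $F_k$-measurable sums gives $\mathbb{E}[Y_{k+1}\mid F_k]\le Y_k$, i.e.\ $(Y_k)_k$ is a supermartingale. \textbf{This is the main obstacle:} $(Y_k)_k$ is not nonnegative; it is only bounded below by $-\sum_{j\ge 0}\widetilde Z_j$, an a.s.-finite but possibly non-integrable random variable, so the convergence theorem does not apply to it directly.

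I would resolve this by localization. For $m\in\mathbb{N}$ let $\tau_m:=\inf\{k:\sum_{j=0}^{k}\widetilde Z_j>m\}$, an $(F_k)$-stopping time because $\sum_{j\le k}\widetilde Z_j$ is $F_k$-measurable. The stopped process $(Y_{k\wedge\tau_m})_k$ is again a supermartingale, and since on $\{k\wedge\tau_m=\ell\}$ one has $\sum_{j=0}^{\ell-1}\widetilde Z_j\le m$, it satisfies $Y_{k\wedge\tau_m}+m\ge\widetilde V_{k\wedge\tau_m}\ge 0$. Hence $(Y_{k\wedge\tau_m}+m)_k$ is a nonnegative supermartingale (integrable once $\mathbb{E}[V_0]<\infty$, which may be assumed) and therefore converges a.s.\ to a finite limit. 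Because $\sum_j\widetilde Z_j<\infty$ a.s., for a.e.\ $\omega$ some $m$ satisfies $\tau_m(\omega)=\infty$, on which event $Y_{k\wedge\tau_m}=Y_k$; taking the union over $m$ shows that $(Y_k)_k$ itself converges a.s.\ to a finite limit.

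Finally I would read off the conclusion. From $Y_k+\sum_{j=0}^{k-1}\widetilde Z_j=\widetilde V_k+\sum_{j=1}^{k}\widetilde U_j$ together with $\sum_j\widetilde Z_j<\infty$ a.s., the sequence $\widetilde V_k+\sum_{j=1}^{k}\widetilde U_j$ converges a.s.; since its partial-sum part is nondecreasing and bounded (as $\widetilde V_k\ge 0$), it follows that $\sum_j\widetilde U_j<\infty$ a.s., and then $\widetilde V_k$ converges a.s.\ as the difference of two a.s.-convergent sequences. Multiplying back by $\Gamma_k$ and using $\Gamma_k\to\Gamma_\infty\in(0,\infty)$ gives that $(V_k)_k$ converges a.s., while $\sum_k U_k=\sum_k\Gamma_k\widetilde U_k\le\Gamma_\infty\sum_k\widetilde U_k<\infty$ a.s., which completes the argument. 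The only genuinely delicate point is the random lower bound on $(Y_k)_k$ in the second step, and the stopping-time localization of the third step is exactly what handles it.
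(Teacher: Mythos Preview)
Your argument is correct and is essentially the classical Robbins--Siegmund proof: rescale by the partial products $\Gamma_k$ to remove the multiplicative $(1+\gamma_k)$ factor, compensate with the cumulative $\widetilde U$- and $\widetilde Z$-sums to obtain a supermartingale, localize with the stopping times $\tau_m$ to turn the random lower bound into a deterministic one so that Doob's nonnegative supermartingale convergence theorem applies, and then unwind. The only loose thread is the parenthetical ``integrable once $\mathbb{E}[V_0]<\infty$, which may be assumed''; strictly speaking the lemma as stated does not impose this, but a further localization (e.g.\ truncating on $\{V_0\le n\}$ or stopping when $\widetilde V_k$ first exceeds $n$) disposes of it, so this is cosmetic rather than a real gap.

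As for comparison with the paper: the paper does \emph{not} supply its own proof of Lemma~\ref{lemma:2.1}. It states the lemma and attributes it to \cite{ROBBINS1971}, then uses it as a black box in the proof of Theorem~\ref{thm:3.2}. Your write-up is therefore not competing with an alternative argument in the paper but rather filling in the proof that the paper outsources to the original reference; the approach you give is the standard one found there.
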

This lemma (\cite{ROBBINS1971}) is used extensively in
the proofs of the theorems of this paper.
The following assumption includes
standard conditions for the step size that appear in the stochastic approximations literature and which usually hold in practice.
\begin{assumption} \label{cond:condition_for_stepsizes}
	The sequence $ {({\eta _k})_k} $ is decreasing, $ \sum\nolimits_k^{} {{\eta _k}}  = \infty$ , $\sum\nolimits_k^{} {\eta _k^2}  < \infty $ and $ \sum\nolimits_k^{} {\frac{{{\eta _k}}}{{\sum\nolimits_{j=0}^{k-1} {{\eta _j}} }}}  = \infty $.
\end{assumption}
Moreover, for the expected value of the SF the following assumption is made.
\begin{assumption} \label{cond:condition_for_stochasticfactor}
	The sequence $ \mathbb{E}_u[u_k] $ is monotone.
\end{assumption}
At this point it is noted that distributions for which (\ref{cond:condition_for_stochasticfactor}) holds are able to be constructed. Discussion on the distribution constructed in this work is deferred to Section \ref{sec:On_the_SF}.
\section{Accelerated Nonconvex SGD Almost-Sure Convergence Rates}
\label{sec:sgd_a_s_convergence_rates}
This sections presents results for accelerated a.s. convergence rates of the SGD algorithm in the nonconvex case. The first result is formulated as follows:
\begin{theorem} \label{thm:3.2}
	Consider the iterates of (\ref{eq:SGD}). Assume that Assumption \ref{as:exp_smooth} holds. Assume that the stochasticity factor is bounded, i.e., $ 0 < u_k \le c_2 $ with $ c_2:=\sup_k u_k $. 
	Assume that the first moment of the stochasticity factor is bounded, i.e., $ 0<\mu_1 \le \mathbb{E}_u[u_k] $  with $ \mu_1:=\inf_k \mathbb{E}_u[u_k] $. 
	Choose stochasticity factor $ u_k $ which satisfies \ref{cond:condition_for_stochasticfactor}. Assume that the stepsizes verify \ref{cond:condition_for_stepsizes}. 
	
	\textit{1.1a.} If $ \mathbb{E}_u[u_k] $  is decreasing,
	if $ Var_u[u_k] $ is increasing, if $ \mathbb{E}_u[u_k] >  Var_u[u_k] + 1 $,
	 and if $ 0 \le \eta_{k} \le 1/(BL\mathbb{E}_u[u_k]) $ for all $ k \in \mathbb{N} $, then:
	\begin{equation}
	\mathop {\min }\limits_{t = 0, \ldots ,k - 1} {\left\| {\nabla f({x_t})} \right\|^2} = o\left( {\frac{(\mathbb{E}_u[u_{k}]-Var_u[u_k])^{-1}}{{\sum\nolimits_{t = 0}^{k - 1} {{\eta _t}} }}} \right) \quad a.s.
	\end{equation}
	\textit{1.1b.} If $ \mathbb{E}_u[u_k] $ is decreasing,
	 and if $ 0 \le \eta_{k} \le 1/(BLc_2) $ for all $ k \in \mathbb{N} $, then:
	\begin{equation}
	\mathop {\min }\limits_{t = 0, \ldots ,k - 1} {\left\| {\nabla f({x_t})} \right\|^2} = o\left( {\frac{1}{\mathbb{E}_u[u_{k}]{\sum\nolimits_{t = 0}^{k - 1} {{\eta _t}} }}} \right) \quad a.s.
	\end{equation}
	\textit{1.2.} If $ \mathbb{E}_u[u_k] $ is increasing, 
	if $ Var_u[u_k] $ is decreasing, if $ \mathbb{E}_u[u_k]<1 $, if $ \frac{Var_u[u_k]}{\mathbb{E}_u[u_k]}<1 $, and 
if $ 0 \le \eta_{k} \le 1/(BL) $ for all $ k \in \mathbb{N} $, then:	
	\begin{equation}
	\mathop {\min }\limits_{t = 0, \ldots ,k - 1} {\left\| {\nabla f({x_t})} \right\|^2} = o\left( \frac{\mathbb{E}_u[u_{k}]-Var_u[u_{k}] }{{\sum\nolimits_{t = 0}^{k - 1} {{\eta _t}} }} \right) \quad a.s.
	\end{equation}
\end{theorem}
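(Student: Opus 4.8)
The plan is to run the classical supermartingale (Robbins--Siegmund) argument through Lemma~\ref{lemma:2.1}, the three cases differing only in how the per-step gradient coefficient is bounded below. First I would apply the $L$-smoothness descent inequality to consecutive iterates of (\ref{eq:SGD}),
\begin{equation*}
f(x_{k+1}) \le f(x_k) - \eta_k u_k \langle \nabla f(x_k), \nabla f_{v_k}(x_k)\rangle + \tfrac{L}{2}\eta_k^2 u_k^2 \|\nabla f_{v_k}(x_k)\|^2 ,
\end{equation*}
and take conditional expectation with respect to the natural filtration $(F_k)_k$ under which $x_k$ is measurable. Using that $u_k$ is controlled independently of the sampling, unbiasedness $\mathbb{E}_{v_k}[\nabla f_{v_k}(x_k)\mid F_k]=\nabla f(x_k)$, and Assumption~\ref{as:exp_smooth} to bound $\mathbb{E}_{v_k}[\|\nabla f_{v_k}(x_k)\|^2\mid F_k]$, I get, after subtracting $f_*$ and writing $m_k:=\mathbb{E}_u[u_k]$, $s_k:=Var_u[u_k]$ (so $\mathbb{E}_u[u_k^2]=s_k+m_k^2$),
\begin{equation*}
\mathbb{E}[f(x_{k+1})-f_*\mid F_k] \le \Bigl(1+\tfrac{LA}{2}\eta_k^2\mathbb{E}_u[u_k^2]\Bigr)(f(x_k)-f_*) - \eta_k\Bigl(m_k-\tfrac{LB}{2}\eta_k\mathbb{E}_u[u_k^2]\Bigr)\|\nabla f(x_k)\|^2 + \tfrac{LC}{2}\eta_k^2\mathbb{E}_u[u_k^2] .
\end{equation*}
This is exactly the form of Lemma~\ref{lemma:2.1} with $V_k=f(x_k)-f_*\ge 0$, $\gamma_k=\tfrac{LA}{2}\eta_k^2\mathbb{E}_u[u_k^2]$ and $Z_k=\tfrac{LC}{2}\eta_k^2\mathbb{E}_u[u_k^2]$; since $0<u_k\le c_2$ forces $\mathbb{E}_u[u_k^2]\le c_2^2$ and $Var_u[u_k]\le c_2^2$, Assumption~\ref{cond:condition_for_stepsizes} makes $\sum_k\gamma_k<\infty$ (hence $\prod_k(1+\gamma_k)<\infty$) and $\sum_k Z_k<\infty$ automatically.

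The regime-specific step is to lower bound the gradient term $U_{k+1}:=\eta_k\bigl(m_k-\tfrac{LB}{2}\eta_k\mathbb{E}_u[u_k^2]\bigr)\|\nabla f(x_k)\|^2$ by a fixed positive multiple of $\eta_k w_k\|\nabla f(x_k)\|^2$ for the intended rate weight $w_k$. In 1.1b I would use $u_k^2\le c_2 u_k$, so $\mathbb{E}_u[u_k^2]\le c_2 m_k$ and $\eta_k\le 1/(BLc_2)$ makes the parenthesis $\ge m_k/2$; here $w_k=m_k$. In 1.1a I keep $\mathbb{E}_u[u_k^2]=s_k+m_k^2$, so $\eta_k\le 1/(BLm_k)$ makes it $\ge \tfrac{m_k^2-s_k}{2m_k}\ge\tfrac{m_k^2-s_k}{2c_2}\ge\tfrac{m_k-s_k}{2c_2}$, the last step using $m_k>1$, which is precisely what $\mathbb{E}_u[u_k]>Var_u[u_k]+1$ yields; here $w_k=m_k-s_k$. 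In 1.2, $\eta_k\le 1/(BL)$ makes it $\ge m_k-\tfrac{s_k+m_k^2}{2}=\tfrac{m_k-s_k}{2}+\tfrac{m_k(1-m_k)}{2}\ge\tfrac{m_k-s_k}{2}$ since $m_k<1$; here again $w_k=m_k-s_k$, strictly positive because $Var_u[u_k]/\mathbb{E}_u[u_k]<1$. In all cases $w_k$ is bounded below by a positive constant (by $\mu_1$, by $1$, by $m_0-s_0$ respectively), so $\sum_k\eta_k w_k=\infty$ by Assumption~\ref{cond:condition_for_stepsizes}. Lemma~\ref{lemma:2.1} then gives that $f(x_k)-f_*$ converges and $\sum_k\eta_k w_k\|\nabla f(x_k)\|^2<\infty$ almost surely.

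Finally I would convert this summability into the stated rates with an elementary deterministic fact: if $(b_k)$ is positive with $\sum_k b_k=\infty$ and $\sum_k b_k a_k<\infty$ for $a_k\ge 0$, then $\bigl(\sum_{t=0}^{k-1}b_t\bigr)\min_{t\le k-1}a_t\to 0$ --- split the series at an index $N$ past which its tail is below $\varepsilon$, bound $\min_{N\le t\le k-1}a_t\le\varepsilon/\sum_{t=N}^{k-1}b_t$, and let $k\to\infty$ using $\sum_{t=N}^{k-1}b_t/\sum_{t=0}^{k-1}b_t\to 1$. With $b_k=\eta_k w_k$ and $a_k=\|\nabla f(x_k)\|^2$ this yields $\min_{t\le k-1}\|\nabla f(x_t)\|^2=o\bigl(1/\sum_{t=0}^{k-1}\eta_t w_t\bigr)$ a.s. The last move is to rewrite the denominator using the monotonicity of $w_k$ assumed in each case: when $w_k$ is decreasing (1.1a, 1.1b) one has $\sum_{t=0}^{k-1}\eta_t w_t\ge w_k\sum_{t=0}^{k-1}\eta_t$, so the rate becomes $o\bigl(w_k^{-1}/\sum_{t=0}^{k-1}\eta_t\bigr)$, i.e.\ the claimed prefactors $(\mathbb{E}_u[u_k]-Var_u[u_k])^{-1}$ and $\mathbb{E}_u[u_k]^{-1}$; when $w_k$ is increasing and bounded below by $w_0>0$ (1.2) one has $o\bigl(1/\sum_{t=0}^{k-1}\eta_t w_t\bigr)\subseteq o\bigl(1/\sum_{t=0}^{k-1}\eta_t\bigr)\subseteq o\bigl(w_k/\sum_{t=0}^{k-1}\eta_t\bigr)$, i.e.\ the claimed prefactor $\mathbb{E}_u[u_k]-Var_u[u_k]$. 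The main obstacle is exactly the bookkeeping of the third paragraph: verifying that each step-size cap turns the $\|\nabla f(x_k)\|^2$-coefficient into a bona fide positive multiple of the target weight $w_k$, since that is where every distributional side condition ($m_k>s_k+1$; $m_k<1$; $s_k/m_k<1$; the monotonicities) is actually consumed; Lemma~\ref{lemma:2.1} and the extraction step then apply uniformly across the three cases.
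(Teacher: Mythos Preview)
Your argument is correct and considerably more economical than the paper's. Both start from the same descent inequality (the paper's (\ref{eq:following the steps in that paper:})) and the same case-by-case lower bounds on the gradient coefficient, but the extraction of the $o(\cdot)$ rate differs genuinely. The paper introduces the running convex combination $g_k$ of past squared gradients (the recursion (\ref{eq:recurrence_nonconvex_2})), folds the quantity $\varphi(k)\propto w_k\,g_k\sum_{t<k}\eta_t$ into a combined potential with $f(x_k)-f_*$, applies Lemma~\ref{lemma:2.1} to that combined potential, and then argues that the limit of $\varphi(k)$ must be zero using the extra stepsize hypothesis $\sum_k \eta_k/\sum_{t<k}\eta_t=\infty$ from Assumption~\ref{cond:condition_for_stepsizes}. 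You instead apply Lemma~\ref{lemma:2.1} once, directly to $V_k=f(x_k)-f_*$, read off $\sum_k \eta_k w_k\|\nabla f(x_k)\|^2<\infty$ a.s., and convert this via the elementary tail-sum lemma into the stated rate; only $\sum_k\eta_k=\infty$ is used, so the fourth clause of Assumption~\ref{cond:condition_for_stepsizes} is never needed. The monotonicity assumptions on $m_k$ and $s_k$ are consumed at exactly the same place in both proofs (to pass from $\sum_{t<k}\eta_t w_t$ to $w_k\sum_{t<k}\eta_t$), and your final containment chain for case 1.2 is legitimate because $w_k=m_k-s_k$ is bounded both above (by $1$, since $m_k<1$) and below (by $w_0>0$), so the two little-$o$ classes coincide. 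In short: same skeleton, but your route bypasses the auxiliary sequence $g_k$ and one stepsize hypothesis, at the cost of inserting a self-contained summability-to-rate lemma.
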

Several remarks on the SGD convergence rate results in the nonconvex case are in order. First, the assumptions of Theorem \ref{thm:3.2} guarantee the a.s. convergence rates presented in the theorem. However, whether the MSLR a.s. convergence rates are accelerated or not compared to the deterministic-learning-rate case, whose rate is $ o\left( (\sum\nolimits_{t = 0}^{k - 1} {{\eta _t}})^{-1} \right) $ (\cite{SHB_AS2021}),  depends on the choice of the SF distribution of MSLR SGD. 
To that end, it can be readily observed what properties the SF needs to satisfy so that the MSLR scheme produces faster a.s. convergence rates than the deterministic-learning-rate case.
Firstly, for 1.1a, for the moments of the SF it should be that $ \mathbb{E}_u[u_{k}]>Var_u[u_{k}] + 1 $. Secondly, for  1.1b, it should be that $ \mathbb{E}_u[u_k] > 1 $ and thirdly, for 1.2, it should be that $ \mathbb{E}_u[u_{k}]<Var_u[u_{k}] + 1 $. However, the last condition can be equivalently written as  $0<\mathbb{E}_u[u_k](1-\frac{Var_u[u_k]}{\mathbb{E}_u[u_k]})<1  $. This is implied by the two conditions  $ \mathbb{E}_u[u_k]<1 $ and $ \frac{Var_u[u_k]}{\mathbb{E}_u[u_k]}<1 $ (for 1.2, these two acceleration conditions happen to also be theorem assumptions, but this is not true in general), given that $ Var_u[u_k] \ge 0 $. The reason that the SF is taken to satisfy these two conditions instead of  $ \mathbb{E}_u[u_{k}]<Var_u[u_{k}] + 1 $, which is a weaker condition, is because in practice they can be more easily checked for adaptive distributions, e.g., as in the case of a uniformly distributed SF discussed in Section \ref{sec:Discussion on the Stochasticity Factor} in more detail.

Second, the requirements of Theorem \ref{thm:3.2} are sufficient but not necessary, i.e., they can be replaced by weaker assumptions and Theorem \ref{thm:3.2} will still hold. Nonetheless, the weaker assumptions are not as easily verified theoretically, and perhaps experimentally, as the assumptions currently given. For example, the mean $ \mathbb{E}_u[u_k] $ and variance $ Var_u[u_k] $  monotonicity assumptions along with the assumptions that follow them in 1.1a and 1.2 could be replaced by the assumptions $  Var_u[u_{k+1}] - Var_u[u_k] > \mathbb{E}_u[u_{k+1}] - \mathbb{E}_u[u_k] $ and $  Var_u[u_{k+1}] - Var_u[u_k] < \mathbb{E}_u[u_{k+1}] - \mathbb{E}_u[u_k] $ respectively. These are satisfied when respectively $ Var'_u[u_k] > E'_u[u_k]  $ and $ Var'_u[u_k] > E'_u[u_k] $ hold, where the derivatives are with respect to $ k $. This is since if, e.g., $ Var'_u[u_k] > E'_u[u_k]  $ then  $ \frac{Var_u[u_{k+1}] - Var_u[u_k]}{k+1-k} > \frac{\mathbb{E}_u[u_{k+1}] - \mathbb{E}_u[u_k]}{k+1-k} $ which gives the required weaker assumption for 1.1a (respectively for 1.2 by replacing $ > $ with $ < $ in the previous). 
However, given that checking assumptions which include both the derivatives of the mean and variance of some adaptive distributions
can become complex quickly, the current assumptions that consider the monotonicity of each of the mean and variance separately, are deemed simpler to check than the assumptions dealing with a combination of the derivatives of these moments.

Third, it is noted that the various requirements that appear in Theorem \ref{thm:3.2} are consistent. In 1.1a, $ Var_u[u_k] $ cannot escape to infinity since it is less than a decreasing and positive $ \mathbb{E}_u[u_k] $. In 1.1b, $ \mathbb{E}_u[u_k] $ cannot diverge to negative infinity since it is positive, and in 1.2, $ \mathbb{E}_u[u_k] $ cannot diverge to infinity since it is less than unity. Moreover, in the denominator of 1.1a the difference $ \mathbb{E}_u[u_k]-Var_u[u_k] $ appears whereas in 1.1b the larger quantity $ \mathbb{E}_u[u_k] $ appears. This should not be taken that 1.1b provides faster convergence rates than 1.1a since in case 1.1a the stepsize, and therefore the term $ {{\sum\nolimits_{t = 0}^{k - 1} {{\eta _t}} }} $, is larger than in case 1.1b so there is a trade-off between the acceleration provided by 1.1a and 1.1b (e.g., if the stepsizes for cases 1.1a and 1.1b are $ \bar\eta_{k}=\frac{1}{BL\mathbb{E}_u[u_k]} $ and  \underbar{$\eta$}$_k=\frac{1}{BLc_2} $ respectively, and, assuming a common SF $ u_k $ for both cases, it is that
$ 1-\frac{Var_u[u_k]}{\mathbb{E}_u[u_k]} \ge \frac{1}{c_2} $, then the former gives faster convergence rates since then $ ({(\mathbb{E}_u[u_{k}]-Var_u[u_k]){\sum\nolimits_{t = 0}^{k - 1} {{\bar\eta _t}} }})^{-1} \ge $($\mathbb{E}_u[u_{k}]\sum\nolimits_{t = 0}^{k - 1}$ \underbar{$\eta$} $_t )^{-1} )$.


Finally, when the SF becomes constant unity, the deterministic-learning-rate a.s. convergence rate  is recovered by the MSLR SGD a.s. convergence rates in Theorem \ref{thm:3.2} since for $ u_k=1 $ it is $ \mathbb{E}_u[u_k]=1 $ and $ Var_u[u_k]=0 $ yielding $ o\left(  (\sum\nolimits_{t = 0}^{k - 1} {{\eta _t}})^{-1} \right) $.

All in all, Theorem \ref{thm:3.2} demonstrates that  for appropriate choices of SFs the MSLR scheme accelerates the a.s. convergence rates of SGD compared to the a.s. convergence rates of its deterministic-learning-rate counterpart.
\begin{proof}
	The proof follows the proof of Lemma 2 in \cite{khaled2020}.
	Starting with (\ref{eq:SGD}), equation (46) in \cite{khaled2020} becomes:
	\begin{equation}
\begin{aligned}
f(x_{k+1})  &\le f(x_k) + \left\langle {\nabla f(x_k), x_{k+1}-x_k} \right\rangle + \frac{L}{2}\left\| x_{k+1}-x_k \right\|^2 \\
& = f(x_k) - u_k \eta_k \left\langle {\nabla f(x_k), g(x_k)} \right\rangle  + \frac{L u_k^2 \eta_k^2}{2}\left\| g(x_k) \right\|^2
\end{aligned}
\end{equation}
	which ultimately results in:
	\begin{equation} \label{eq:following the steps in that paper:}
	\begin{aligned}
	&\mathbb{E}_k[f(x_{k+1} ) - f_*] + \frac{ \eta_{k}}{2} (2u_k-u^2_kLB\eta_k) \left\| \nabla f (x_k) \right\|^2  \\&\le (1+u_k^2 \eta_{k}^2 AL)(f(x_k)-f_*) + \frac{u_k^2\eta_k^2LC}{2}.
	\end{aligned}
	\end{equation}
	
	\subsection{Case 1a:	$ \mathbb{E}_u[u_{k+1}] \le \mathbb{E}_u[u_k] $ and $ \eta_k \le \frac{1}{LB\mathbb{E}_u[u_k]} $.} \label{case:1a}
	
	Using $ \eta_{k} \le 1 /(BL\mathbb{E}_u[u_k]) $ in (\ref{eq:following the steps in that paper:}):
\begin{equation} \label{eq:eq_another_for_case1.2}
\begin{aligned}
&\mathbb{E}_k[f(x_{k+1}) - f_*] + \frac{\eta_{k}}{2} (2u_k-\frac{u^2_k}{\mathbb{E}_u[u_k]}) \left\| \nabla f (x_k) \right\|^2  \le (1+u_k^2 \eta_{k}^2 AL)(f(x_k)-f_*) + \frac{u_k^2\eta_k^2LC}{2}.
\end{aligned}
\end{equation}	
From $ u_k\le c_2 $ and Assumption \ref{cond:condition_for_stepsizes} it is that $ \sum\nolimits_{k = 0}^\infty  \frac{{u_k^2\eta _k^2LC}}{2} <  \sum\nolimits_{k = 0}^\infty  \frac{{c_2^2\eta _k^2LC}}{2}  < \infty  $. Also it is that $ \sum\nolimits_{k = 0}^\infty  {\eta _k^2} < \infty  $, which means $ \prod\nolimits_{k = 0}^\infty  {(1 + {\mathbb{E}_u}[u_k^2]\eta _k^2AL)} <  \prod\nolimits_{k = 0}^\infty  {(1 + c_2^2\eta _k^2AL)} < \infty  $. Thus, from Lemma \ref{lemma:2.1}, $ f(x_k) - f_* $ converges a.s. Taking expectations with respect to $ u $ yields:
\begin{equation}
\begin{aligned}
&\mathbb{E}_k[f(x_{k+1} ) - f_*] + \frac{ \eta_{k}}{2} (2\mathbb{E}_u[u_k]- \frac{\mathbb{E}_u[u^2_k]}{\mathbb{E}_u[u_k]}) \left\| \nabla f (x_k) \right\|^2 \\&\le (1+\mathbb{E}_u[u_k^2] \eta_{k}^2 AL)(f(x_k)-f_*) + \frac{\mathbb{E}_u[u_k^2]\eta_k^2LC}{2}.
\end{aligned}
\end{equation}
Using $ \mathbb{E}_u[u^2_k] = Var_u[u_k] + \mathbb{E}_u[u_k]^2 $:
\begin{equation}
\begin{aligned}
&\mathbb{E}_k[f(x_{k+1} ) - f_*] + \mathbb{E}_u[u_k]\frac{ \eta_{k}}{2} (2-\frac{\mathbb{E}_u^2[u_k]+Var_u[u_k]}{\mathbb{E}^2_u[u_k]}) \left\| \nabla f (x_k) \right\|^2 \\&\le (1+\mathbb{E}_u[u_k^2] \eta_{k}^2 AL)(f(x_k)-f_*) + \frac{\mathbb{E}_u[u_k^2]\eta_k^2LC}{2}.
\end{aligned}
\end{equation}
This means:
\begin{equation}
\begin{aligned}
&\mathbb{E}_k[f(x_{k+1} ) - f_*] + \mathbb{E}_u[u_k]\frac{ \eta_{k}}{2} (1-\frac{Var_u[u_k]}{\mathbb{E}^2_u[u_k]}) \left\| \nabla f (x_k) \right\|^2 \\& \le (1+\mathbb{E}_u[u_k^2] \eta_{k}^2 AL)(f(x_k)-f_*)+ \frac{\mathbb{E}_u[u_k^2]\eta_k^2LC}{2}.
\end{aligned}
\end{equation}
Since $ \mathbb{E}_u[u_k]>Var_u[u_k]+1 $ from the assumptions of the theorem, it is that $ \mathbb{E}_u[u_k] \ge 1 $, which yields $ \mathbb{E}^2_u[u_k] \ge  \mathbb{E}_u[u_k] $ or, equivalently, $ -\frac{Var_u[u_k]}{\mathbb{E}^2_u[u_k]} \ge -\frac{Var_u[u_k]}{\mathbb{E}_u[u_k]}  $, since $ Var_u[u_k]\ge0 $. Thus:
\begin{equation}
\label{eq:eq_beforeuse_of_gk_2}
\begin{aligned}
&\mathbb{E}_k[f(x_{k+1} ) - f_*] + \mathbb{E}_u[u_k]\frac{ \eta_{k}}{2} (1-\frac{Var_u[u_k]}{\mathbb{E}_u[u_k]}) \left\| \nabla f (x_k) \right\|^2 \\& \le (1+\mathbb{E}_u[u_k^2] \eta_{k}^2 AL)(f(x_k)-f_*)+ \frac{\mathbb{E}_u[u_k^2]\eta_k^2LC}{2}.
\end{aligned}
\end{equation}
	Then, let:
	\begin{equation}\label{eq:recurrence_nonconvex_2}
	\begin{aligned}
		&{w_k} = \frac{{2{\eta _k}}}{{\sum\nolimits_{t = 0}^{k } {{\eta _t}} }}, \quad {g_0} = {\left\| {\nabla f({x_0})} \right\|^2}, \\&  {g_{k + 1}} = (1 - {w_k}){g_k} + {w_k}{\left\| {\nabla f({x_k})} \right\|^2}
	\end{aligned}
	\end{equation}
	for all $ k \in \mathbb{N} $. Moreover, since $ g_{k} $ is a linear combination of $ {\left\| {\nabla f({x_t})} \right\|^2}, t = 0, \cdots , k - 1$  it is that:
\begin{equation} \label{eq:g_k=grad_case1a}
{g_k} = \sum\nolimits_{t = 0}^{k - 1} {{\tilde{w}_t}} {\left\| {\nabla f({x_t})} \right\|^2}
\end{equation} 
for some sequence $ {({\tilde{w}_t})_t} $ with  $ \sum\nolimits_{t = 0}^{k - 1} {{\tilde{w}_t}}  = 1 $, and where $ 0 \le \tilde{w}_t \le 1 $ since $ \eta_t $ is decreasing.
	Then, using (\ref{eq:recurrence_nonconvex_2}) to replace $ {\left\| {\nabla f({x_k})} \right\|^2} $ in (\ref{eq:eq_beforeuse_of_gk_2}) yields:
	\begin{equation} 
\begin{aligned} \label{eq:eq_for_case1.2}
& \mathbb{E}_k[f(x_{k+1}) - f_*] + \mathbb{E}_u[u_{k }](1-\frac{Var_u[u_{k }]}{\mathbb{E}_u[u_{k }]} ) \frac{\sum\nolimits_{t=0}^{k} {\eta_t}}{2} g_{k+1} + \frac{\eta_k}{2} \mathbb{E}_u[u_k](1-\frac{Var_u[u_k]}{\mathbb{E}_u[u_k]}) g_k  \\ &\le (1+\mathbb{E}_u[u_k^2] \eta_{k}^2 AL)(f(x_k)-f_*)
+  \mathbb{E}_u[u_k](1-\frac{Var_u[u_k]}{\mathbb{E}_u[u_k]}) \frac{\sum\nolimits_{t=0}^{k-1} {\eta_t}}{2} g_{k} + \frac{\mathbb{E}_u[u_k^2]\eta_k^2LC}{2}.
\end{aligned}
\end{equation}
	Since $ \mathbb{E}_u[u_k] \ge \mathbb{E}_u[u_{k+1}] $ and $ Var_u[u_{k }] \le Var_u[u_{k+1 }] $:
	\begin{equation}
\begin{aligned}
& \mathbb{E}_k[f(x_{k+1}) - f_*] + \mathbb{E}_u[u_{k+1 }](1-\frac{Var_u[u_{k+1 }]}{\mathbb{E}_u[u_{k+1 }]} ) \frac{\sum\nolimits_{t=0}^{k} {\eta_t}}{2} g_{k+1} + \frac{\eta_k}{2} \mathbb{E}_u[u_k](1-\frac{Var_u[u_k]}{\mathbb{E}_u[u_k]}) g_k  \\ &\le (1+\mathbb{E}_u[u_k^2] \eta_{k}^2 AL)(f(x_k)-f_*)
+  \mathbb{E}_u[u_k](1-\frac{Var_u[u_k]}{\mathbb{E}_u[u_k]}) \frac{\sum\nolimits_{t=0}^{k-1} {\eta_t}}{2} g_{k} + \frac{\mathbb{E}_u[u_k^2]\eta_k^2LC}{2} 
\end{aligned}
\end{equation}
	and since $(1+\mathbb{E}_u[u_k^2] \eta_{k}^2 AL) > 1 $:
	\begin{equation}
\begin{aligned}
& \mathbb{E}_k[f(x_{k+1}) - f_*] + \mathbb{E}_u[u_{k+1 }](1-\frac{Var_u[u_{k+1 }]}{\mathbb{E}_u[u_{k+1 }]} ) \frac{\sum\nolimits_{t=0}^{k} {\eta_t}}{2} g_{k+1} + \frac{\eta_k}{2} \mathbb{E}_u[u_k](1-\frac{Var_u[u_k]}{\mathbb{E}_u[u_k]}) g_k  \\ &\le (1+\mathbb{E}_u[u_k^2] \eta_{k}^2 AL)(f(x_k)-f_*) +  (1+\mathbb{E}_u[u_k^2] \eta_{k}^2 AL)\mathbb{E}_u[u_k] \frac{\sum\nolimits_{t=0}^{k-1} {\eta_t}}{2} g_{k} + \frac{\mathbb{E}_u[u_k^2]\eta_k^2LC}{2}.
\end{aligned}
\end{equation}
	Or equivalently:
	\begin{equation}
\begin{aligned}
& \mathbb{E}_k[(f(x_{k+1}) - f_*)+\varphi_1(k+1) ] + \frac{\eta_k}{2} \mathbb{E}_u[u_k](1-\frac{Var_u[u_k]}{\mathbb{E}_u[u_k]}) g_k  \\ &\le
(1+\mathbb{E}_u[u_k^2] \eta_{k}^2 AL) ((f(x_k)-f_*) + \varphi_1(k))  + \frac{\mathbb{E}_u[u_k^2]\eta_k^2LC}{2}
\end{aligned}
\end{equation}
	where $ \varphi_1(k) :=\mathbb{E}_u[u_k](1-\frac{Var_u[u_k]}{\mathbb{E}_u[u_k]}) \frac{\sum\nolimits_{t=0}^{k-1} {\eta_t}}{2} g_{k} $.
	Then, from $ u_k \le c_2 $ which gives $ \mathbb{E}_u[u^2_k] \le c^2_2 $, and from Assumption \ref{cond:condition_for_stepsizes} it is that $ \sum\nolimits_{k = 0}^\infty  {{\mathbb{E}_u}[u_k^2]\eta _k^2AL}  < {c_2^2}AL\sum\nolimits_{k = 0}^\infty  {\eta _k^2}  < \infty  $, which means $ \prod\nolimits_{k = 0}^\infty  {(1 + {\mathbb{E}_u}[u_k^2]\eta _k^2AL)}  < \infty  $. Moreover, it is that  $\frac{c_2^2LC}{2}\sum\nolimits_{k = 0}^\infty \eta_k^2 < \infty $ from Assumption \ref{cond:condition_for_stepsizes}. Then, from Lemma \ref{lemma:2.1} this means that $  \sum\nolimits_{t = 0}^{\infty} {{\eta _t}}\mathbb{E}_u[u_t](1-\frac{Var_u[u_t]}{\mathbb{E}_u[u_t]})g_t < \infty  $ a.s. and also that:
		$$  \left( (f(x_{k}) - f_*) +  \varphi_1(k)  \right) \text{converges a.s. }$$
	This gives that  $ (\varphi_1(k))_k $ converges a.s. since it is that  
	$ (f(x_{k+1}) - f_*) $ converges a.s. Then, algebraically manipulating  $ \mathop {\lim }\limits_{k \to \infty } \varphi_1(k)  $ it is that:
	\begin{equation}
\begin{aligned}
\mathop {\lim }\limits_{k \to \infty } \frac{{{\eta _k}}}{{\sum\nolimits_{t = 0}^{k - 1} {{\eta _t}} }}\mathbb{E}_u[u_{k}](1-\frac{Var_u[u_{k}]}{\mathbb{E}_u[u_{k}]} )\sum\nolimits_{t = 0}^{k - 1} {{\eta _t}} {g_k} =\mathop {\lim }\limits_{k \to \infty } {{\eta _k}}\mathbb{E}_u[u_k](1-\frac{Var_u[u_k]}{\mathbb{E}_u[u_k]})g_k= 0.
\end{aligned}
\end{equation}
	Thus, $ \mathop {\lim }\limits_{k \to \infty }\mathbb{E}_u[u_{k}](1-\frac{Var_u[u_{k}]}{\mathbb{E}_u[u_{k}]} )\sum\nolimits_{t = 0}^{k - 1} {{\eta _t}} {g_k} = 0 $ from Assumption \ref{cond:condition_for_stepsizes}, i.e., from $ \sum\nolimits_{k=0}^{} {\frac{{{\eta _k}}}{{\sum\nolimits_{t = 0}^{k - 1} {{\eta _t}} }}} = \infty $. Therefore:
	\begin{equation}
	{g_k} = o\left( {\frac{1}{{\mathbb{E}_u[u_{k}](1-\frac{Var_u[u_{k}]}{\mathbb{E}_u[u_{k}]} )\sum\nolimits_{t = 0}^{k - 1} {{\eta _t}} }}} \right) \quad a.s.
	\end{equation}
	that is:
	\begin{equation}
	{g_k} = o\left( {\frac{1}{(\mathbb{E}_u[u_{k}]-Var_u[u_{k}]){\sum\nolimits_{t = 0}^{k - 1} {{\eta _t}} }}} \right) \quad a.s.
	\end{equation}
	Moreover, from (\ref{eq:g_k=grad_case1a}) it is that $ g_k \ge \mathop {\min }\limits_{t = 0, \ldots ,k - 1} {\left\| {\nabla f({x_t})} \right\|^2} \ge 0 $, which yields:
	\begin{equation}
	\mathop {\min }\limits_{t = 0, \ldots ,k - 1} {\left\| {\nabla f({x_t})} \right\|^2} = o\left( {\frac{(\mathbb{E}_u[u_{k}]-Var_u[u_{k}])^{-1}}{{\sum\nolimits_{t = 0}^{k - 1} {{\eta _t}} }}} \right),
	\end{equation}
	almost surely.
	\subsection{Case 1b: $ \mathbb{E}_u[u_{k+1}] \le \mathbb{E}_u[u_k] $ and $ \eta_k \le \frac{1}{LBc_2}  $.}

	Using $ u_k \le c_2 $ in (\ref{eq:following the steps in that paper:}):
	\begin{equation}
	\begin{aligned}
		&\mathbb{E}_k[f(x_{k+1} ) - f_*] + \frac{u_k \eta_{k}}{2} (2-c_2LB\eta_k) \left\| \nabla f (x_k) \right\|^2   \le (1+u_k^2 \eta_{k}^2 AL)(f(x_k)-f_*) + \frac{u_k^2\eta_k^2LC}{2}.
	\end{aligned}
	\end{equation}
	Using $ \eta_{k} \le 1 /(LBc_2) $ results in:
	\begin{equation}
	\begin{aligned}
		&\mathbb{E}_k[f(x_{k+1} ) - f_*] + \frac{u_k \eta_{k}}{2} \left\| \nabla f (x_k) \right\|^2  \le (1+u_k^2 \eta_{k}^2 AL)(f(x_k)-f_*) + \frac{u_k^2\eta_k^2LC}{2}.
	\end{aligned}
	\end{equation}
	By taking expectations with respect to $ u $:
	\begin{equation}\label{eq:some_eq_in_case1.1b}
	\begin{aligned}
	&\mathbb{E}_k[f(x_{k+1} ) - f_*] + \frac{ \mathbb{E}_u[u_k]\eta_{k}}{2} \left\| \nabla f (x_k) \right\|^2 \\&\le (1+\mathbb{E}_u[u_k^2] \eta_{k}^2 AL)(f(x_k)-f_*) + \frac{\mathbb{E}_u[u_k^2]\eta_k^2LC}{2}.
	\end{aligned}
	\end{equation}
	From Assumption \ref{cond:condition_for_stepsizes} it is that $ \sum\nolimits_{k = 0}^\infty  \frac{{u_k^2\eta _k^2LC}}{2} <  \sum\nolimits_{k = 0}^\infty  \frac{{c_2^2\eta _k^2LC}}{2}  < \infty  $. Also it is that $ \sum\nolimits_{k = 0}^\infty  {\eta _k^2} < \infty  $, which means $ \prod\nolimits_{k = 0}^\infty  {(1 + {\mathbb{E}_u}[u_k^2]\eta _k^2AL)} <  \prod\nolimits_{k = 0}^\infty  {(1 + c_2^2\eta _k^2AL)} < \infty  $. Thus from Lemma \ref{lemma:2.1}, $ f(x_k) - f_* $ converges a.s.
	Using (\ref{eq:recurrence_nonconvex_2}) to replace $ {\left\| {\nabla f({x_k})} \right\|^2} $ in (\ref{eq:some_eq_in_case1.1b}) gives:
	\begin{equation}
\begin{aligned}
& \mathbb{E}_k[f(x_{k+1}) - f_*] + \mathbb{E}_u[u_{k }] \frac{\sum\nolimits_{t=0}^{k} {\eta_t}}{2} g_{k+1} + \frac{\eta_k}{2} \mathbb{E}_u[u_k] g_k\\ &  \le (1+\mathbb{E}_u[u_k^2] \eta_{k}^2 AL)(f(x_k)-f_*)
+  \mathbb{E}_u[u_k] \frac{\sum\nolimits_{t=0}^{k-1} {\eta_t}}{2} g_{k} + \frac{\mathbb{E}_u[u_k^2]\eta_k^2LC}{2}.
\end{aligned}
\end{equation}
	Since $ \mathbb{E}_u[u_k] \ge \mathbb{E}_u[u_{k+1}] $:
	\begin{equation}
\begin{aligned}
& \mathbb{E}_k[f(x_{k+1}) - f_*] + \mathbb{E}_u[u_{k+1 }] \frac{\sum\nolimits_{t=0}^{k} {\eta_t}}{2} g_{k+1} + \frac{\eta_k}{2} \mathbb{E}_u[u_k] g_k\\ &  \le (1+\mathbb{E}_u[u_k^2] \eta_{k}^2 AL)(f(x_k)-f_*)
+  \mathbb{E}_u[u_k](1-\frac{Var_u[u_k]}{\mathbb{E}_u[u_k]^2}) \frac{\sum\nolimits_{t=0}^{k-1} {\eta_t}}{2} g_{k} + \frac{\mathbb{E}_u[u_k^2]\eta_k^2LC}{2}
\end{aligned}
\end{equation}
	and since $(1+\mathbb{E}_u[u_k^2] \eta_{k}^2 AL) > 1 $:
	\begin{equation}
\begin{aligned}
& \mathbb{E}_k[f(x_{k+1}) - f_* + \varphi_2(k+1)]  + \frac{\eta_k}{2} \mathbb{E}_u[u_k] g_k   \\& \le (1+\mathbb{E}_u[u_k^2] \eta_{k}^2 AL) \left( (f(x_k)-f_*) + \varphi_2  \right) + \frac{\mathbb{E}_u[u_k^2]\eta_k^2LC}{2}
\end{aligned}
\end{equation}
	where $  \varphi_2(k) =  \mathbb{E}_u[u_{k }] \frac{\sum\nolimits_{t=0}^{k-1} {\eta_t}}{2} g_{k} $. Then from $ u_k\le c_2 $ which gives $ \mathbb{E}_u[u^2_k] \le c^2_2 $ and from Assumption \ref{cond:condition_for_stepsizes} it is that $ \sum\nolimits_{k = 0}^\infty  {{\mathbb{E}_u}[u_k^2]\eta _k^2AL}  < {c_2^2}AL\sum\nolimits_{k = 0}^\infty  {\eta _k^2}  < \infty  $, which means $ \prod\nolimits_{k = 0}^\infty  {(1 + {\mathbb{E}_u}[u_k^2]\eta _k^2AL)}  < \infty  $. Moreover, it is that  $\frac{\mathbb{E}_u[u_k^2]LC}{2}\sum\nolimits_{k = 0}^\infty \eta_k^2 < \frac{c_2^2LC}{2}\sum\nolimits_{k = 0}^\infty \eta_k^2 < \infty $ from Assumption \ref{cond:condition_for_stepsizes}. Then, from Lemma \ref{lemma:2.1} this means that $  \sum\nolimits_{t = 0}^{k - 1} {{\eta _t}}\mathbb{E}_u[u_t]g_t < \infty  $ a.s. and also that $ ((f(x_{k}) - f_*) +  \mathbb{E}_u[u_{k}]\frac{\sum\nolimits_{t=0}^{k} {\eta_t}}{2} g_{k})  $ converges a.s.
	This gives that $ ({\varphi_2(k)})_k $ converges a.s. since it is also that $ (f(x_{k+1}) - f_*) $ converges a.s. This yields for $ \mathop {\lim }\limits_{k \to \infty } \varphi_2(k) $: 
	$$ \mathop {\lim }\limits_{k \to \infty } \frac{{{\eta _k}}}{{\sum\nolimits_{t = 0}^{k - 1} {{\eta _t}} }}\mathbb{E}_u[u_{k}]\sum\nolimits_{t = 0}^{k - 1} {{\eta _t}} {g_k} =\mathop {\lim }\limits_{k \to \infty } {{\eta _k}}\mathbb{E}_u[u_k]g_k = 0 .$$
	This means that  $ \mathop {\lim }\limits_{k \to \infty }\mathbb{E}_u[u_{k}]\sum\nolimits_{t = 0}^{k - 1} {{\eta _t}} {g_k} = 0 $ from Assumption \ref{cond:condition_for_stepsizes}, i.e., from $ \sum\nolimits_{k=0}^{} {\frac{{{\eta _k}}}{{\sum\nolimits_{t = 0}^{k - 1} {{\eta _t}} }}} = \infty $. Therefore:
	\begin{equation}
	{g_k} = o\left( {\frac{1}{{\mathbb{E}_u[u_{k}]\sum\nolimits_{t = 0}^{k - 1} {{\eta _t}} }}} \right) \quad a.s.
	\end{equation}
	Moreover, from (\ref{eq:g_k=grad_case1a}) it is that $ g_k \ge \mathop {\min }\limits_{t = 0, \ldots ,k - 1} {\left\| {\nabla f({x_t})} \right\|^2} \ge 0 $, which yields:
	\begin{equation}
	\mathop {\min }\limits_{t = 0, \ldots ,k - 1} {\left\| {\nabla f({x_t})} \right\|^2} = o\left( {\frac{1}{\mathbb{E}_u[u_{k}]{\sum\nolimits_{t = 0}^{k - 1} {{\eta _t}} }}} \right) \quad a.s.
	\end{equation}	

	\subsection{Case 2: $ \mathbb{E}_u[u_k] \le \mathbb{E}_u[u_{k+1}] $ and $ \eta_k \le \frac{1}{BL} $.}

	Using $ \eta_{k} \le 1 /(BL) $ in (\ref{eq:following the steps in that paper:}):
\begin{equation} \label{eq:eq_another_for_case1.22}
\begin{aligned}
&\mathbb{E}_k[f(x_{k+1}) - f_*] + \frac{\eta_{k}}{2} (2u_k-u^2_k) \left\| \nabla f (x_k) \right\|^2  \le (1+u_k^2 \eta_{k}^2 AL)(f(x_k)-f_*) + \frac{u_k^2\eta_k^2LC}{2}.
\end{aligned}
\end{equation}	
From $ u_k\le c_2 $ and Assumption \ref{cond:condition_for_stepsizes} it is that $ \sum\nolimits_{k = 0}^\infty  \frac{{u_k^2\eta _k^2LC}}{2} <  \sum\nolimits_{k = 0}^\infty  \frac{{c_2^2\eta _k^2LC}}{2}  < \infty  $. Also it is that $ \sum\nolimits_{k = 0}^\infty  {\eta _k^2} < \infty  $, which means $ \prod\nolimits_{k = 0}^\infty  {(1 + {\mathbb{E}_u}[u_k^2]\eta _k^2AL)} <  \prod\nolimits_{k = 0}^\infty  {(1 + c_2^2\eta _k^2AL)} < \infty  $. Thus, from Lemma \ref{lemma:2.1}, $ f(x_k) - f_* $ converges a.s. Taking expectations with respect to $ u $ yields:
\begin{equation}
\begin{aligned}
&\mathbb{E}_k[f(x_{k+1} ) - f_*] + \frac{ \eta_{k}}{2} (2\mathbb{E}_u[u_k]- \mathbb{E}_u[u^2_k]) \left\| \nabla f (x_k) \right\|^2 \\&\le (1+\mathbb{E}_u[u_k^2] \eta_{k}^2 AL)(f(x_k)-f_*) + \frac{\mathbb{E}_u[u_k^2]\eta_k^2LC}{2}.
\end{aligned}
\end{equation}
Using $ \mathbb{E}_u[u^2_k] = Var_u[u_k] + \mathbb{E}_u[u_k]^2 $:
\begin{equation}
\begin{aligned}
&\mathbb{E}_k[f(x_{k+1} ) - f_*] + \mathbb{E}_u[u_k]\frac{ \eta_{k}}{2} (2-\frac{\mathbb{E}_u^2[u_k]+Var_u[u_k]}{\mathbb{E}_u[u_k]}) \left\| \nabla f (x_k) \right\|^2 \\&\le (1+\mathbb{E}_u[u_k^2] \eta_{k}^2 AL)(f(x_k)-f_*) + \frac{\mathbb{E}_u[u_k^2]\eta_k^2LC}{2}.
\end{aligned}
\end{equation}
This means:
\begin{equation}
\begin{aligned}
&\mathbb{E}_k[f(x_{k+1} ) - f_*] + \mathbb{E}_u[u_k]\frac{ \eta_{k}}{2} (2-\mathbb{E}_u[u_k]-\frac{Var_u[u_k]}{\mathbb{E}_u[u_k]}) \left\| \nabla f (x_k) \right\|^2 \\& \le (1+\mathbb{E}_u[u_k^2] \eta_{k}^2 AL)(f(x_k)-f_*)+ \frac{\mathbb{E}_u[u_k^2]\eta_k^2LC}{2}.
\end{aligned}
\end{equation}
Since $ \mathbb{E}_u[u_k]<1 $ from the assumptions of the theorem:
\begin{equation}
\label{eq:eq_for_case1.22}
\begin{aligned}
&\mathbb{E}_k[f(x_{k+1} ) - f_*] + \mathbb{E}_u[u_k]\frac{ \eta_{k}}{2} (1-\frac{Var_u[u_k]}{\mathbb{E}_u[u_k]}) \left\| \nabla f (x_k) \right\|^2 \\& \le (1+\mathbb{E}_u[u_k^2] \eta_{k}^2 AL)(f(x_k)-f_*)+ \frac{\mathbb{E}_u[u_k^2]\eta_k^2LC}{2}.
\end{aligned}
\end{equation}
Then, using (\ref{eq:recurrence_nonconvex_2}) to replace $ {\left\| {\nabla f({x_k})} \right\|^2} $ in (\ref{eq:eq_for_case1.22}) yields (\ref{eq:eq_for_case1.2}).
Denoting $ \psi_k := \mathbb{E}_u[u_k](1-\frac{Var_u[u_k]}{\mathbb{E}_u[u_k]}) $, dividing (\ref{eq:eq_for_case1.2}) by $ \psi_k^2  $ and since $(1+\mathbb{E}_u[u_k^2] \eta_{k}^2 AL) > 1 $:
\begin{equation}
\begin{aligned}
& \mathbb{E}_k[\frac{1}{ \psi^2_k }(f(x_{k+1}) - f_*)] + \frac{1}{ \psi_k } \frac{\sum\nolimits_{t=0}^{k} {\eta_t}}{2} g_{k+1} + \frac{\eta_k}{2} \frac{g_k}{ \psi_k } 
\\ &\le  
\frac{(1+\mathbb{E}_u[u_k^2] \eta_{k}^2 AL)}{ \psi_k^2 }(f(x_k)-f_*) +
\frac{(1+\mathbb{E}_u[u_k^2] \eta_{k}^2 AL)}{ \psi_k } \frac{\sum\nolimits_{t=0}^{k-1} {\eta_t}}{2} g_{k} + \frac{\mathbb{E}_u[u_k^2]\eta_k^2LC}{2 \psi_k^2 }.
\end{aligned}
\end{equation}
Since $ \mathbb{E}_u[u_k] \le \mathbb{E}_u[u_{k+1}] $ which means $ \frac{1}{\mathbb{E}_u[u_k]} \ge \frac{1}{\mathbb{E}_u[u_{k+1}]} $
and since $ Var_u[u_{k+1}] \le Var_u[u_{k}]$	it is that:
\begin{equation}\label{eq:decreasing Varu[u_k]/Eu[u_k]}
\frac{1}{\mathbb{E}_u[u_k](1-\frac{Var_u[u_k]}{\mathbb{E}_u[u_k]})} \ge \frac{1}{\mathbb{E}_u[u_{k+1}](1-\frac{Var_u[u_{k+1}]}{\mathbb{E}_u[u_{k+1}]})}
\end{equation}
which means $ (\psi_{k})^{-1} \ge (\psi_{k+1})^{-1} $. This yields:
\begin{equation}
\begin{aligned}
& \mathbb{E}_k[\frac{1}{\psi_{k+1}}(f(x_{k+1}) - f_*) + \frac{1}{ \psi_{k+1} } \frac{\sum\nolimits_{t=0}^{k} {\eta_t}}{2} g_{k+1}] + \frac{\eta_kg_k}{2\psi_k }\\
&\le (1+\mathbb{E}_u[u_k^2] \eta_{k}^2 AL)\psi(k) + \eta_k^2\frac{c_2^2LC}{2N}
\end{aligned}
\end{equation}
	where $ \psi_k \ge \frac{c_2^2}{\mu_1^2} (\mu_1 + \mu_1^2 - c_2^2)^2 :=N $ for all $ k $, from using $ Var_u[u_{k}]= \mathbb{E}_u[u^2_{k}] -\mathbb{E}_u[u_{k}^2] $, then using $ \mathbb{E}_u[u^2_k]\le c_2^2 $ and $ \mathbb{E}_u[u_k] \le \mu_1 $, and doing some algebra.
	Then, from $ \mathbb{E}_u[u_k^2] \le c_2^2 $ and from Assumption \ref{cond:condition_for_stepsizes} it is that $ \sum\nolimits_{k = 0}^\infty  {{\mathbb{E}_u}[u_k^2]\eta _k^2AL}  < {c^2}AL\sum\nolimits_{k = 0}^\infty  {\eta _k^2}  < \infty  $, which means $ \prod\nolimits_{k = 0}^\infty  {(1 + {\mathbb{E}_u}[u_k^2]\eta _k^2AL)}  < \infty  $. Moreover, it is that  $\frac{c_2^2LC}{2N}\sum\nolimits_{k = 0}^\infty \eta_k^2 < \infty $ from Assumption \ref{cond:condition_for_stepsizes}. Then, from Lemma \ref{lemma:2.1} this means that 
	$  \sum\nolimits_{t = 0}^{k - 1}\frac{ {{\eta _t}}g_t}{ \psi_t } < \infty  $
	 a.s. and:
	 \begin{equation}\label{eq:close_to_final_result_for_case1.2}
	 	 \left(  \frac{1}{\psi^2_k}(f(x_{k}) - f_*) + \frac{1}{ \psi_k } \frac{\sum\nolimits_{t=0}^{k} {\eta_t}}{2} g_{k}  \right) \text{converges a.s. }
	 \end{equation}
	Note that from (\ref{eq:eq_another_for_case1.2}), by dividing by $ \psi_k^2  $ and using (\ref{eq:decreasing Varu[u_k]/Eu[u_k]}):
	\begin{equation}
\begin{aligned}
& \mathbb{E}_k[\frac{1}{ \psi_{k+1}^2 }(f(x_{k+1}) - f_*)]  + \frac{1}{ \psi_k }  \left\| \nabla f(x_k) \right\|^2  \\
&\le (1+\mathbb{E}_u[u_k^2] \eta_{k}^2 AL)  \frac{1}{ \psi_k^2} (f(x_k)-f_*)  + \frac{\mathbb{E}_u[u_k^2]\eta_k^2LC}{2 \psi_k^2 }.
\end{aligned}
\end{equation}
	From $ \mathbb{E}_u[u_k^2]\le c_2^2 $ and Assumption \ref{cond:condition_for_stepsizes} it is that $ \sum\nolimits_{t = 0}^{k - 1}\frac{\mathbb{E}_u[u_k^2]\eta_k^2LC}{2{\psi_k^2}} \le \frac{c_2^2LC}{2N} \sum\nolimits_{t = 0}^{k - 1}\eta_k^2 <  \infty $. Also, from Assumption \ref{cond:condition_for_stepsizes} it is that $ \sum\nolimits_{k = 0}^\infty  {{\mathbb{E}_u}[u_k^2]\eta _k^2AL}  < {c^2}AL\sum\nolimits_{k = 0}^\infty  {\eta _k^2}  < \infty  $ which gives $ \prod\nolimits_{k = 0}^\infty  {(1 + {\mathbb{E}_u}[u_k^2]\eta _k^2AL)}  < \infty  $. Then from Lemma \ref{lemma:2.1} it is that $ \frac{1}{\psi_k^2}(f(x_{k}) - f_*) $ converges a.s.	This means from (\ref{eq:close_to_final_result_for_case1.2}) that  $ {\left( {\left( \frac{1}{ \psi_k }{ \frac{\sum\nolimits_{t = 0}^{k - 1}{{\eta _t}}}{2} } \right){g_k}} \right)_k} $ converges a.s. 
	This yields:
	$$ \mathop {\lim }\limits_{k \to \infty } \frac{{{\eta _k}}}{{\sum\nolimits_{t = 0}^{k - 1} {{\eta _t}} }}\frac{1}{\psi_k }\sum\nolimits_{t = 0}^{k - 1} {{\eta _t}} {g_k} =\mathop {\lim }\limits_{k \to \infty }  \frac{\eta _kg_k}{ \psi_k } = 0 .$$ 
	This means that $ \mathop {\lim }\limits_{k \to \infty } \frac{1}{\psi_k}\sum\nolimits_{t = 0}^{k - 1} {{\eta _t}} {g_k} = 0 $ from Assumption \ref{cond:condition_for_stepsizes}, i.e., from $ \sum\nolimits_{k=0}^{} {\frac{{{\eta _k}}}{{\sum\nolimits_{t = 0}^{k - 1} {{\eta _t}} }}} = \infty $. Therefore, using the definition of $ \psi_k $ it is that:
	\begin{equation}
	{g_k} = o\left( {\frac{\mathbb{E}_u[u_{k}](1-\frac{Var_u[u_{k}]}{\mathbb{E}_u[u_{k}]} )}{{\sum\nolimits_{t = 0}^{k - 1} {{\eta _t}} }}} \right) \quad a.s.
	\end{equation}
	Moreover,  from (\ref{eq:g_k=grad_case1a}) it is that $ g_k \ge \mathop {\min }\limits_{t = 0, \ldots ,k - 1} {\left\| {\nabla f({x_t})} \right\|^2} \ge 0 $, which yields:
	\begin{equation}
	\mathop {\min }\limits_{t = 0, \ldots ,k - 1} {\left\| {\nabla f({x_t})} \right\|^2} = o\left( {\frac{\mathbb{E}_u[u_{k}](1-\frac{Var_u[u_{k}]}{\mathbb{E}_u[u_{k}]} )}{{\sum\nolimits_{t = 0}^{k - 1} {{\eta _t}} }}} \right) \quad a.s.
	\end{equation}
\end{proof}
\section{Discussion on the UMSLR Stochasticity Factor}
\label{sec:On_the_SF}
\label{sec:Discussion on the Stochasticity Factor}
\begin{figure}[b]
	\captionsetup[subfigure]{aboveskip=-1pt,belowskip=-5pt}
	\centering
	\begin{subfigure}[b]{0.49\linewidth}
		\centering
		\includegraphics[height=0.21\textheight,width=\textwidth]{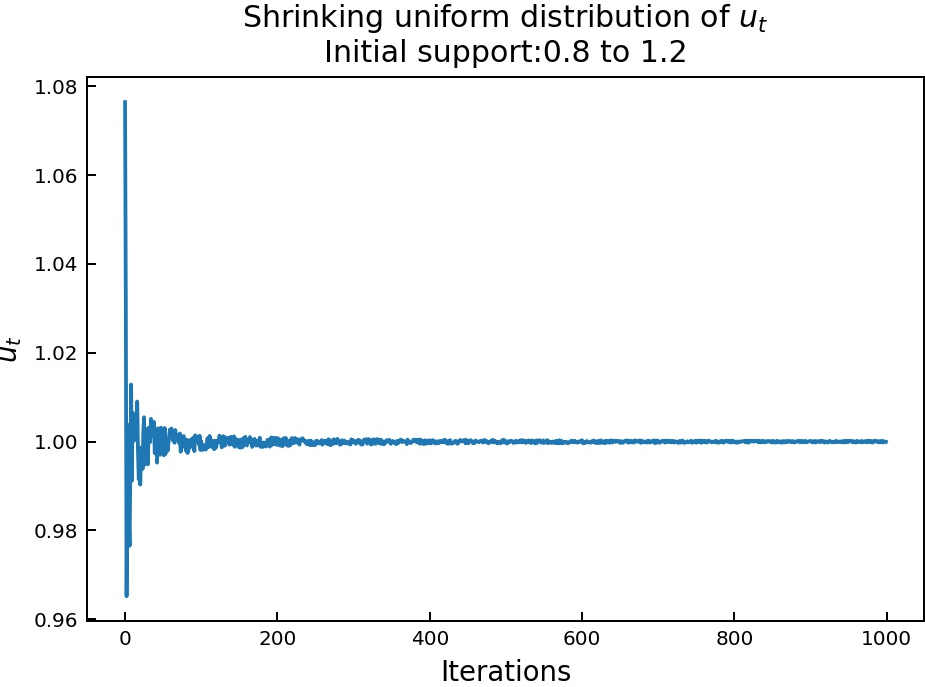}
		\label{fig:lr_0_8}
	\end{subfigure}
	\begin{subfigure}[b]{0.49\linewidth}
		\centering
		\includegraphics[height=0.21\textheight,width=\textwidth]{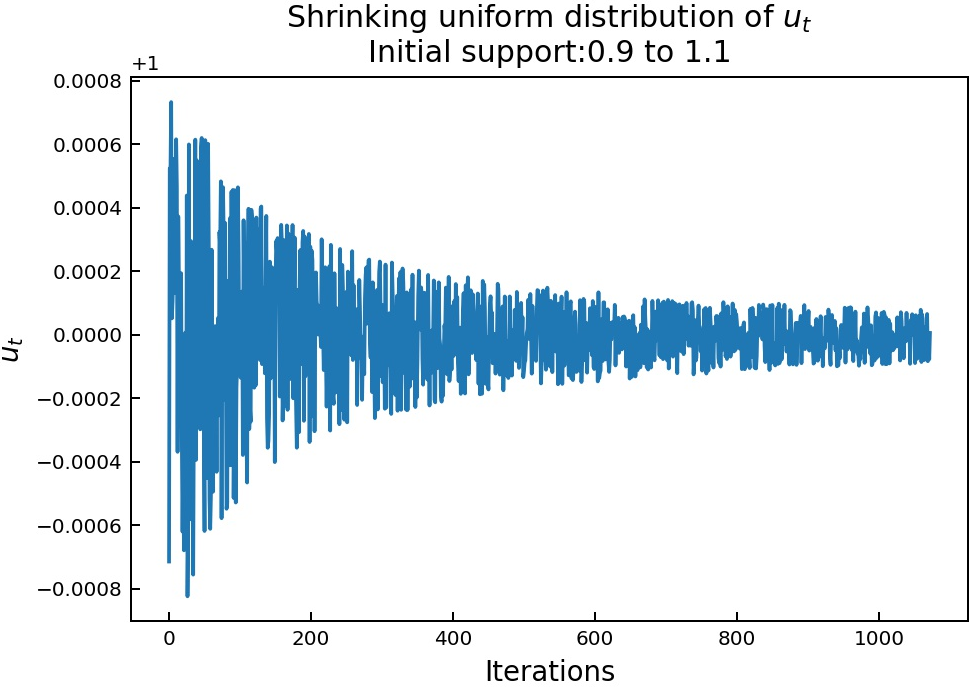}
		\label{fig:lr_0_9}
	\end{subfigure}
	\caption{Plots that illustrate two SFs for the  UMSLR scheme considered. Plot (a) shows an SF with support  $ [0.8^{1/k},1.2^{1/k}] $  and plot (b) a close-up of an SF with support $ [0.9^{1/k},1.1^{1/k}] $, where $ k $ is the iteration count.}
	\label{fig:lr}
\end{figure}
As mentioned in Section \ref{sec:Prelim_and_Assumptions}, SF distributions for which Assumption \ref{cond:condition_for_stochasticfactor} holds are able to be constructed. One such SF distribution is the uniform distribution and in particular when the SF is taken to follow $ u_k = \mathcal{U}\left(\sqrt[k+1]{c_1},\sqrt[k+1]{c_2}\right) $ with $ c_1 > 0 $. This  SF distribution will be used for the empirical results of this paper. The SF for some hyperparameter values is shown in Fig. \ref{fig:lr}.
 An MSLR scheme which uses a uniformly distributed SF will be referred to as UMSLR. In this work, UMSLR will refer to the aforementioned $ u_k $.
For the UMSLR scheme Assumption \ref{cond:condition_for_stochasticfactor} holds for a wide range of the hyperparameters $ c_1,c_2 $ rendering this assumption nonrestrictive in practice, as Proposition \ref{prop:Eu[uk]_monotone} shows.
%
%
\begin{proposition} \label{prop:Eu[uk]_monotone}
	Let $ u_k \sim \mathcal{U}\left(\sqrt[k+1]{c_1},\sqrt[k+1]{c_2}\right) $ with $ 0 < c_1 < c_2 $. Then $ \mathbb{E}_u[u_k] $ is monotone for all $ k\ge 0 $ for either (a): $ c_1\ge1$ and $c_2>1 $, or (b): $ c_1<1$ and $c_2\le1 $, or (c): 
	$ 0<c_1<1$ and  $c_2=e^{W(-c_1 lnc_1)} $
	, or (d): $ 0<c_1<1$ and $ \frac{1}{c_1} < c_2 $, where $ W $ denotes the principal branch of the Lambert-W function.
\end{proposition}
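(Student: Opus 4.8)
The plan is to compute $\mathbb{E}_u[u_k]$ explicitly and reduce the discrete monotonicity question to the monotonicity of a smooth one-variable function. Since $u_k$ is uniform on $[\sqrt[k+1]{c_1},\sqrt[k+1]{c_2}]$, its mean is the midpoint
\begin{equation*}
\mathbb{E}_u[u_k] = \tfrac12\bigl(c_1^{1/(k+1)} + c_2^{1/(k+1)}\bigr) = \phi\bigl(\tfrac{1}{k+1}\bigr), \qquad \phi(t) := \tfrac12\bigl(e^{t\ln c_1} + e^{t\ln c_2}\bigr).
\end{equation*}
As $k$ ranges over $\mathbb{N}$, the argument $t = 1/(k+1)$ takes the strictly decreasing values $1,\tfrac12,\tfrac13,\dots$ in $(0,1]$. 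Hence it suffices to show that $\phi$ is monotone on $(0,1]$ in each of the four cases; then $(\mathbb{E}_u[u_k])_k$ is monotone in the opposite direction. The tool throughout is the derivative $\phi'(t) = \tfrac12\bigl(\ln c_1\,e^{t\ln c_1} + \ln c_2\,e^{t\ln c_2}\bigr)$.

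First I would dispose of the two same-sign cases. In case (a), $\ln c_1 \ge 0$ and $\ln c_2 > 0$, so $\phi'(t) > 0$ for every $t$; thus $\phi$ is strictly increasing on $(0,1]$ and $\mathbb{E}_u[u_k]$ is strictly decreasing in $k$. In case (b), $\ln c_1 < 0$ and $\ln c_2 \le 0$, so $\phi'(t) < 0$, $\phi$ is strictly decreasing on $(0,1]$, and $\mathbb{E}_u[u_k]$ is strictly increasing. These are immediate.

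The substantive work is cases (c) and (d), where $\ln c_1 < 0 < \ln c_2$ and the sign of $\phi'$ can change. Here I would rearrange $\phi'(t) \ge 0$ into $e^{t(\ln c_2 - \ln c_1)} \ge \tfrac{-\ln c_1}{\ln c_2}$ (both sides being positive), equivalently $t \ge t^\star$ with $t^\star := \dfrac{\ln\!\bigl(\tfrac{-\ln c_1}{\ln c_2}\bigr)}{\ln c_2 - \ln c_1}$ when $\tfrac{-\ln c_1}{\ln c_2} > 1$, while $\phi' > 0$ on all of $(0,\infty)$ when $\tfrac{-\ln c_1}{\ln c_2} \le 1$. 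In case (d), the hypothesis $1/c_1 < c_2$ gives $c_1 c_2 > 1$, hence $-\ln c_1 < \ln c_2$, so $\tfrac{-\ln c_1}{\ln c_2} < 1$ and $\phi' > 0$ on $(0,1]$: $\mathbb{E}_u[u_k]$ is strictly decreasing. In case (c), $c_2 = e^{W(-c_1\ln c_1)}$ with $-c_1\ln c_1 \in (0,1/e]$, so $W$ is taken on its principal branch and $\ln c_2 = W(-c_1\ln c_1) > 0$; the defining identity $W(x)e^{W(x)} = x$ then gives $c_2\ln c_2 = \ln c_2\,e^{\ln c_2} = -c_1\ln c_1$, i.e. $\tfrac{-\ln c_1}{\ln c_2} = \tfrac{c_2}{c_1} = e^{\ln c_2 - \ln c_1}$, which is precisely $t^\star = 1$. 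Therefore $\phi'(t) < 0$ on $(0,1)$ and $\phi'(1) = 0$, so $\phi$ is strictly decreasing on $(0,1]$ and $\mathbb{E}_u[u_k]$ is strictly increasing. Combining the four cases proves the claim. I expect the only real obstacle to be the bookkeeping in cases (c)–(d): reducing the sign condition on $\phi'$ to the single threshold $t^\star$, checking that $-c_1\ln c_1$ lies in the domain of the principal Lambert-W branch (so that $c_2 = e^{W(-c_1\ln c_1)} > 1$ is consistent with the mixed-sign regime), and recognizing that this choice of $c_2$ is exactly the one for which $t^\star = 1$, which is what pins the monotone behavior to the sample set $\{1,\tfrac12,\tfrac13,\dots\}$.
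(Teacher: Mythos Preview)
Your proof is correct and follows essentially the same line as the paper's: compute the mean as the midpoint, differentiate, and in the mixed-sign regime locate the unique critical point via the Lambert-$W$ identity $c_2\ln c_2 = -c_1\ln c_1$, checking that it sits at the boundary $t=1$ (equivalently $\underline{k}=0$) in case~(c) and outside the relevant range in case~(d). The reparametrization $t=1/(k+1)$ is a cosmetic difference from the paper's differentiation in $k$, and your argument is in fact tidier---in particular your directions of monotonicity in cases~(a) and~(b) are the correct ones.
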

\begin{proof}
	The expectation of UMSLR with respect to $ u $ is $ \mathbb{E}_u[u_k] = \frac{\sqrt[k+1]{c_1}+\sqrt[k+1]{c_2}}{2}  $. For ease of notation, we denote $ k+1 $ as $ \bar{k} $. Then the derivative is:
	\begin{equation}
	({\mathbb{E}_u}[{u_k}])' =  - \frac{{\sqrt[\bar{k}]{{{c_1}}}\ln ({c_1}) + \sqrt[\bar{k}]{{{c_2}}}\ln ({c_2})}}{{{\bar{k}^2}}}
	\end{equation}

$Case \,\, (a):  c_1\ge1,c_2\ge1. $ 

If $ c_1 > 1 $ and $ c_2\ge1 $ then $ \mathbb{E}_u[u_k] $ is increasing since if $ \bar{k}_1 \le \bar{k}_2 $ then $  \mathbb{E}_u[u_{\bar{k}_1}] \le \mathbb{E}_u[u_{\bar{k}_2}] $. This is because $ \sqrt[\bar{k}_1]{c_1}  < \sqrt[\bar{k}_2]{c_1} $ and $ \sqrt[\bar{k}_1]{c_2} \le \sqrt[\bar{k}_2]{c_2} $ therefore $ \frac{\sqrt[\bar{k}_1]{c_1}+\sqrt[\bar{k}_1]{c_2}}{2} < \frac{\sqrt[\bar{k}_2]{c_1}+\sqrt[\bar{k}_2]{c_2}}{2}$, or equivalently $  \mathbb{E}_u[u_{\bar{k}_1}] < \mathbb{E}_u[u_{\bar{k}_2}] $.

$Case \,\, (b): 0<c_1 < 1,  c_2\le1. $

If $ c_1 < 1 $ and $ c_2 \le 1 $ then $ \mathbb{E}_u[u_k] $ is decreasing since if $ \bar{k}_1 \le \bar{k}_2 $ then $  \mathbb{E}_u[u_{\bar{k}_1}] \ge \mathbb{E}_u[u_{\bar{k}_2}] $. This is because in this case $ \sqrt[\bar{k}_1]{c_1} \ge \sqrt[\bar{k}_2]{c_1} $ and $ \sqrt[\bar{k}_1]{c_2} > \sqrt[\bar{k}_2]{c_2} $ therefore $ \frac{\sqrt[\bar{k}_1]{c_1}+\sqrt[\bar{k}_1]{c_2}}{2} > \frac{\sqrt[\bar{k}_2]{c_1}+\sqrt[\bar{k}_2]{c_2}}{2}$, or equivalently $  \mathbb{E}_u[u_{\bar{k}_1}] > \mathbb{E}_u[u_{\bar{k}_2}] $. 

$Case \,\, (c):  0<c_1<1,c_2=e^{W(-c_1lnc_1)}.  $

If $ c_2>1 $ and $ 0 < c_1 < 1 $ then the derivative of $ \mathbb{E}_u[u_k] $ has a single root at:
\[\underbar{k} =  - \frac{{\ln ({c_2}) - \ln ({c_1})}}{{\ln \left( { - \frac{{\ln ({c_2})}}{{\ln ({c_1})}}} \right)}}-1, \quad \underbar{k} =0, 1, \cdots \]
If $ \underbar{k}=0 $ this means that $ \mathbb{E}_u[u_k] $ will be monotone for all $ k \ge 0 $, i.e., it will be monotone for the whole duration of the algorithm. The condition $ \underbar{k}=0 $ is satisfied when $ \frac{{\ln ({c_2}) - \ln ({c_1})}}{{\ln \left( { - \frac{{\ln ({c_2})}}{{\ln ({c_1})}}} \right)}} -1=0 $, or equivalently when $ c_1 \ln c_1 + c_2 \ln c_2 = 0 $ from which it follows $ e^{\ln c_2} \ln c_2 = -c_1 \ln c_1  $. This means that $ \ln c_2 = W(-c_1 \ln c_1) $. Then, by utilizing the principal branch of the Lambert-W function it is that $ c_2 = e^{W(-c_1 \ln c_1)} $. 

$Case \,\, (d): 0<c_1<1, \frac{1}{c_1} < c_2. $

Moreover, the same is true when $ \underbar{k}<0 $. This can happen if $ \underbar{k}\le -1 $, that is,
$  \frac{{\ln ({c_2}) - \ln ({c_1})}}{{\ln \left( { - \frac{{\ln ({c_2})}}{{\ln ({c_1})}}} \right)}} \ge 0 $.
This means that $ \ln ({c_2}) - \ln ({c_1}) >0 $, i.e., $ c_2>c_1 $ or equivalently $ c_2 >1>c_1 $, and $ \ln \left( { - \frac{{\ln ({c_2})}}{{\ln ({c_1})}}} \right) >0 $, i.e., $ \frac{1}{c_1}<c_2 $.
Therefore, $ \mathbb{E}_u[u_k] $ is monotone for either $ c_1\ge1,c_2\ge1 $ or $ c_1\le1,c_2\le1 $, or $ 0<c_1<1$ and  $1<c_2<e^{W(1/e)} $ or $ 0<c_1<1$ and $ \frac{1}{c_1} < c_2 $, and thus the proof.
\end{proof}

It is noted that the SF distribution of UMSLR satisfies the required assumptions of Theorem \ref{thm:3.2}, and in particular case 1.2 This is because the experiments use the hyperparameter values from case (b) of Proposition \ref{prop:Eu[uk]_monotone} i.e., $ 0<c_1<c_2<1 $. These values yield a monotone, and in fact increasing, $ \mathbb{E}_u[u_k] $. Moreover, for every finite values $ c_1 $ and $ c_2 $, $ \mathbb{E}_u[u_k] $ has an infimum. 

Furthermore, case 1.2 requires increasing $ \mathbb{E}_u[u_k] $, decreasing $ Var_u[u_k] $,  $ \mathbb{E}_u[u_k]<1 $ and  $ Var_u[u_k] < \mathbb{E}_u[u_k] $. The first requirement is satisfied as aforementioned. The third is satisfied since $ \mathbb{E}_u[u_k] =(c_1^{1/k}+c_2^{1/k})/2<1  $ for $ 0<c_1<c_2<1 $. The fourth since  $ Var_u[u_{k}] = 	\frac{(c_2^{1/k}-c_1^{1/k})^2}{12}	< \frac{(c_2^{1/k}+c_1^{1/k})^2 }{4} = \mathbb{E}_u[u_k]^2 < \mathbb{E}_u[u_k] $ where the first inequality follows from the expression of the variance of a uniformly distributed, at each $ k $, random variable, and the latter inequality follows because $ \mathbb{E}_u[u_k] < 1 $. The second is satisfied because for the variance derivative it holds that $ \frac{{\partial Va{r_u}[{u_k}]}}{{\partial {u_k}}} =  - \frac{1}{{6{k^2}}}(\sqrt[k]{c_2} - \sqrt[k]{c_1})(\sqrt[k]{c_2}\log (c_2) - \sqrt[k]{c_1}\log (c_1)) < 0 $ since for the $ c_1,c_2 $ values considered, both the parentheses factors are positive. The first factor is positive when $ c_2 > c_1 $ and the second when $ \sqrt[k]{c_2} \ln c_2 > \sqrt[k]{c_1} \ln c_1 $ or equivalently $ e^{\ln c_2} \ln c_2^{1/k} > e^{\ln c_1} \ln c_1^{1/k} $. Applying the principal branch of the Lambert W function then gives $ \ln c_2^{1/k} > W(c_1^{1/k} \ln c_1^{1/k} ) $, which means $ c_2 > e^{k W(c_1^{1/k} \frac{1}{k} \ln c_1 )} = c_1 $. Therefore, the variance is decreasing for the hyperparameters $ 0<c_1<c_2<1 $  used in UMSLR. 

Finally, it is verified that UMSLR  provides faster a.s. convergence rates for SGD than a determinisitc learning rate since from the above it is readily obtained that $ 0<\mathbb{E}_u[u_k](1-\frac{Var_u[u_k]}{\mathbb{E}_u[u_k]})<1 $  which means   $ \mathbb{E}_u[u_{k}]-Var_u[u_{k}] <1  $. This yields that the UMSLR a.s. convergence rate of  $ o\left( \frac{\mathbb{E}_u[u_{k}]-Var_u[u_{k}] }{{\sum\nolimits_{t = 0}^{k - 1} {{\eta _t}} }} \right)  $ is faster than the deterministic-case rate of $ o\left( \frac{1 }{{\sum\nolimits_{t = 0}^{k - 1} {{\eta _t}} }} \right)  $. This is demonstrated experimentally in the next section.

\section{Results}
\label{sec:results}

Experimental results using a stochastic learning rate scheme applied to SGD in the nonconvex and smooth setting and comparisons to the determinisitc-learning-rate SGD are presented in this section. In specific, the stochastic learning rate scheme used was UMSLR, i.e., MSLR with an SF described by $ u_k \sim \mathcal{U}\left(\sqrt[k+1]{c_1},\sqrt[k+1]{c_2} \right) $. The plots depict either training or testing running losses which were produced by varying the random seed 40 times and then taking the average of the results. In summary, when the algorithms where equipped with UMSLR they produced significantly accelerated convergence and overall optimization performance than when they employed a deterministic learning rate in the nonconvex setting, as predicted by Theorem \ref{thm:3.2}. The loss function used was the cross-entropy loss.

In order to compare the results of UMSLR SGD to its deterministic learning rate counterpart, t-tests were used for all of the plots. Bonferroni correction with a family-wise error rate of 0.05 was applied to these t-tests to determine which plots produced statistically significant results. The step size was taken to be a constant. The network architecture was ResNet18 and the datasets used were CIFAR-10 and CIFAR-100 (\cite{CIFARdata09}). For both algorithms the hyperparameters, i.e., learning rates, momentums and the UMSLR SF constants, were chosen via grid search. All experiments were conducted for 20 epochs each.  

As depicted in Figs. \ref{fig:SGD_CIFAR10} and \ref{fig:SGD_CIFAR100}, the UMSLR scheme demonstrates significantly enhanced optimization performance compared to a deterministic learning rate. Figures \ref{fig:SGD_CIFAR10_train} and \ref{fig:SGD_CIFAR100_train} demonstrate accelerated minimization performance for the training phase as predicted by Theorem \ref{thm:3.2}. Even though, given the nonconvex setting, the theorem concerns accelerated convergence to a local minimum, for the experiments conducted convergence was achieved to the global minimum, i.e, zero, as demonstrated by the plots. Furthermore, in \ref{fig:SGD_CIFAR10_test} and \ref{fig:SGD_CIFAR100_test} the UMSLR scheme exhibits enhanced testing-phase performance compared to the deterministic-learning-rate case since it can be observed that the algorithm with UMSLR achieves both a faster and a lower minimum testing loss  than their deterministic learning-rate versions. All of the aforementioned results were statistically significant using a family-wise error rate of 0.05. Even though the experiments were run for 20 epochs each, the plots for the testing losses for SGD depict the losses up to the point when overfitting begins, for clarity of presentation (i.e., up to the point when the testing losses reach their minimum). 

Furthermore, even if the purpose of this work is not producing state-of-the-art testing performance but rather to showcase the viability of stochastic-learning-rate methods, testing empirical results were included for completeness to demonstrate the improved testing performance when using a stochastic learning rate scheme. More specifically, in all of the cases presented in these plots the algorithm with an MSLR type of learning rate achieves both a faster and lower minimum testing loss than its deterministic-learning-rate counterpart, even if the theoretical analysis conducted in Theorem \ref{thm:3.2} concerned only the training phase and not the generalization performance of the MSLR scheme.  In-detail analysis, both theoretical and experimental, of the testing performance for stochastic-learning-rate algorithms is left as an avenue for future work.
\begin{figure}
	\centering
	\begin{subfigure}[b]{0.49\textwidth}
		\centering
		\includegraphics[width=\textwidth]{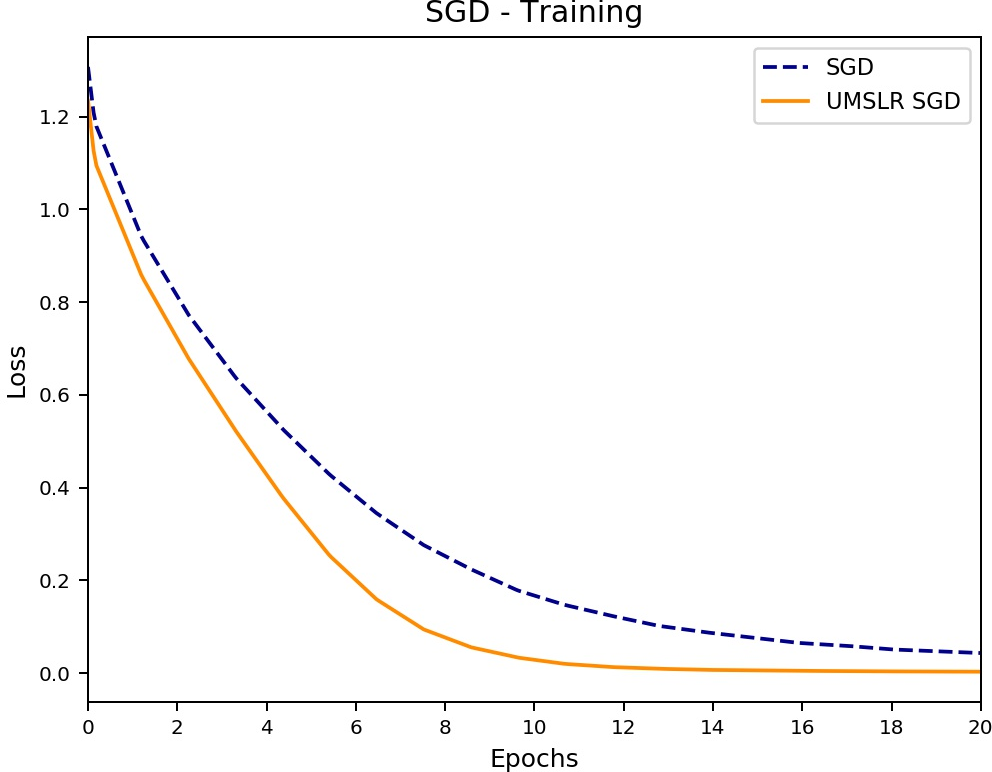}
		\caption{}
		\label{fig:SGD_CIFAR10_train}
	\end{subfigure}
	\begin{subfigure}[b]{0.49\textwidth}
		\centering
		\includegraphics[width=\textwidth]{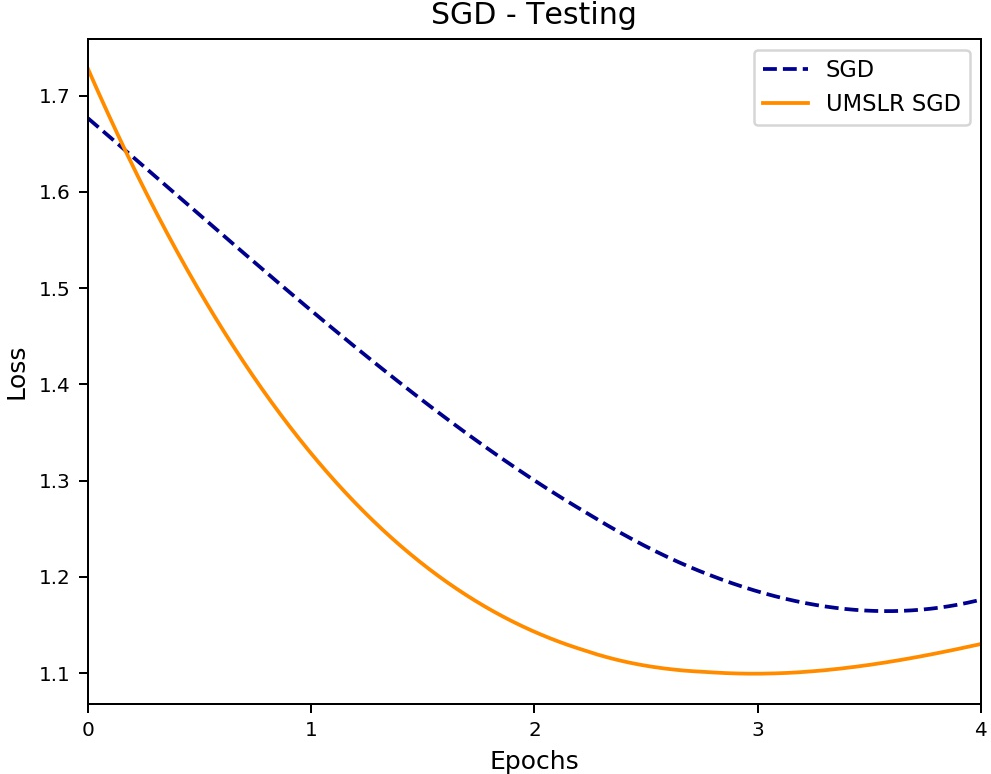}
		\caption{}
		\label{fig:SGD_CIFAR10_test}
	\end{subfigure}
	\caption{Plots that illustrate the UMSLR scheme (in orange) and the deterministic learning rate (in blue) schemes applied to SGD under the nonconvex setting for CIFAR-10 and hyperparameters  $ c_1=0.3 $ and $ c_2=0.8 $.
	}
	\label{fig:SGD_CIFAR10}
\end{figure}
\begin{figure}
	\centering
	\begin{subfigure}[b]{0.49\textwidth}
		\centering
		\includegraphics[width=\textwidth]{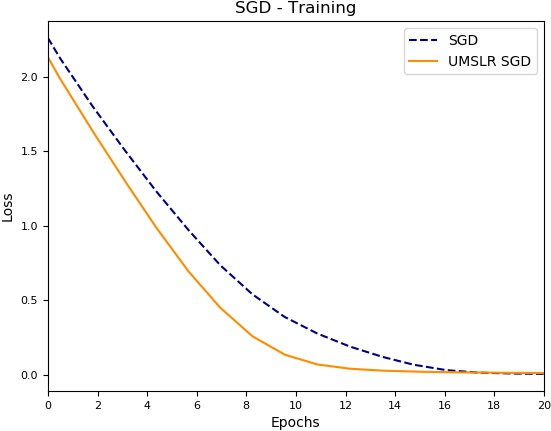}
		\caption{}
		\label{fig:SGD_CIFAR100_train}
	\end{subfigure}
	\begin{subfigure}[b]{0.49\textwidth}
		\centering
		\includegraphics[width=\textwidth]{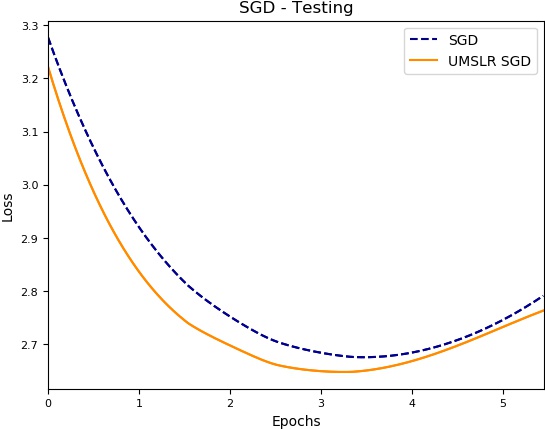}
		\caption{}
		\label{fig:SGD_CIFAR100_test}
	\end{subfigure}
	\caption{Plots that illustrate the UMSLR scheme (in orange) and the deterministic learning rate (in blue) schemes applied to SGD under the nonconvex setting for CIFAR-100 and hyperparameters  $ c_1=0.6 $ and $ c_2=0.9 $.}
	\label{fig:SGD_CIFAR100}
\end{figure}
\section{Conclusion and Future Work}
\label{sec:conclusion}
This work introduced a learning-rate scheme that makes the learning rate stochastic by multiplicatively equipping it with a random variable, coined stochastic factor. Almost sure convergence rates of the SGD algorithm in the nonconvex and smooth setting employing the stochastic learning rate scheme was theoretically analyzed. The theoretical discussion that followed suggested how the hyperparameter values introduced by the stochastic-learning-rate scheme should be chosen. Empirical results on popular datasets demonstrated noticeable increase in optimization performance, presenting stochastic-learning-rate schemes as a viable option for enhancing performance. Gains in testing performance were also observed.
In-depth generalization performance studies, convergence analysis of algorithms besides SHB and SGD, and investigating the effect of various stochastic factor distributions and hyperparameters on algorithm performance are some paths for future work.
\newpage
\bibliographystyle{abbrvnat}
\bibliography{refnew}

\end{document}